\documentclass[11pt]{article}
\usepackage{latexsym}
\usepackage{amsfonts}
\usepackage{mathrsfs,amsthm,amsmath}
\usepackage{todonotes}
\usepackage{color}
\newtheorem{theorem}{Theorem}[section]
\newtheorem{proposition}[theorem]{Proposition}

\newtheorem{lemma}{Lemma}[section]

\newtheorem{example}{Example}[section]

\begin{document}
\title{A large deviation principle for a class of weighted means of random variables which converges weakly
	to the Dickman distribution}
\author{Rita Giuliano\thanks{Address: Dipartimento di
		Matematica "L. Tonelli", Università di Pisa, Largo Bruno
		Pontecorvo 5, I-56127 Pisa, Italy. e-mail:
		\texttt{giuliano@dm.unipi.it}} \and Claudio Macci\thanks{Address:
		Dipartimento di Matematica, Università di Roma Tor Vergata, Via
		della Ricerca Scientifica, I-00133 Rome, Italy. e-mail:
		\texttt{macci@mat.uniroma2.it}}}
\date{}
\maketitle
\maketitle
\begin{abstract}\noindent
	In this paper we consider a wide class of weighted means of random variables which converge
	weakly to the Dickman distribution. This is inspired by a result proved in \cite{BhattacharjeeMolchanov}.
	Then we prove a large deviation principle for the sequence of these weighted means. In  particular 
	we recover a result proved in \cite{GiulianoMacciESAIM-PS}. Moreover the generalized framework  of  
	this paper allows to consider suitable Neyman Type A distributed random variables.\\
	
	\noindent\emph{Keywords:} Dickman function, compound Poisson distribution, Neyman Type A distribution.\\
	\noindent\emph{Mathematical Subject Classification:} 60F10, 60F05,
	11K99.
\end{abstract}

\section{Introduction}
 
The Dickman function $\rho$, introduced in \cite{Dickman}, is defined as the continuous solution 
of the delay differential equation
$$u\rho^\prime(u)+\rho(u-1)=0\ (u>1)$$
with the initial condition $\rho(u)=0$ for $u\in[0,1]$ (see, e.g., \cite{Tenenbaum}, Section III.5.4).
This function plays an important role in analytic number theory (see, e.g., \cite{DDD}; see also \cite{G}
for the connection with the \emph{Smooth Numbers}) and in other several fields (a list of references can be found 
in \cite{HwangTsai}). It is known (see \cite{Tenenbaum}) that the Laplace transform of $\rho$ is 
\begin{equation} \nonumber
	  \int_0^\infty e^{s x}\rho(x)dx=\exp\left( \gamma +\int_0^1\frac{e^{sy}-1}{y}dy\right)\
	\mbox{for all}\ s\in\mathbb{R},
\end{equation} 
where $\gamma$ is Euler's constant; thus $e^{-\gamma}\rho$ is a probability density, and the distribution with 
this density is the Dickman distribution $D_1$. More in general, for $\nu>0$, a random variable $Z$ has Dickman 
distribution $D_\nu$ if
\begin{equation}\label{eq:dickman-mgf}
	\mathbb{E}\left[e^{sZ}\right]= \exp\left(\nu\int_0^1\frac{e^{sy}-1}{y}dy\right)\
	\mbox{for all}\ s\in\mathbb{R}.
\end{equation}  
The moment generating function \eqref{eq:dickman-mgf} can also be found in other references: see
e.g. eq. (2.5) in \cite{Hensley} (where $K(\alpha)$ is identically equal to 1) and eq. (66) in 
\cite{AT}.  
  
\smallskip
\noindent
Some results on the Dickman distribution concerns weak convergence of some sequences of random variables; see for
instance \cite{AT}, \cite{ABT}, \cite{V} (Theorem 4.7.7), and \cite{GiulianoMacciESAIM-PS} 
(Proposition 2.5 and Section 4). More recent results concerning a generalized Dickman distribution can be found in \cite{P}.
Such results have interest in the study of  random combinatorial structures (see again  \cite{AT}, \cite{ABT}, \cite{P})
and number theory applications (see \cite{CS} and Section 4 in \cite{GiulianoMacciESAIM-PS}). Other asymptotic results
concern large deviations; see e.g. \cite{GiulianoMacciESAIM-PS} (Proposition 3.2).
The theory of large deviations gives an asymptotic computation of small probabilities on an exponential scale (see e.g.
\cite{DemboZeitouni} as a reference on this topic); some preliminiaries will be recalled in Section \ref{Preliminaries}. 

 \smallskip
 \noindent
The present paper has been stimulated by a more recent weak convergence result to the Dickman distribution presented in
\cite{BhattacharjeeMolchanov} (Theorem 3.3), which has some analogies with the weak convergence result in
\cite{GiulianoMacciESAIM-PS} cited above; here we prove a large deviation principle, analogous to the one proved in 
\cite{GiulianoMacciESAIM-PS}. The approach in \cite{BhattacharjeeMolchanov} seems to be more general than the one in 
\cite{GiulianoMacciESAIM-PS}, but we can obtain a large deviation principle with the same speed and the same rate function 
as in \cite{GiulianoMacciESAIM-PS} (see Theorem \ref{main} for a precise statement); in particular we 
can recover a particular case of the large deviation principle in \cite{GiulianoMacciESAIM-PS}.
However the more general approach suggested by the paper \cite{BhattacharjeeMolchanov} allows to consider other interesting 
cases; one of them concerns random variables with \emph{Neyman Type A} distributions (which are particular compound Poisson 
distributions, with nonnegative and integer valued random summands). To the best of our knowledge, the example 
presented in this paper involving Neyman Type A distributed random variables does not appear in the literature.

\smallskip
\noindent
We conclude with the outline of the paper. 
Section \ref{Preliminaries} is devoted to recall some preliminaries on large deviations and some results and examples from
\cite{GiulianoMacciESAIM-PS}. In Section \ref{framework} we present the generalized framework studied in this paper, i.e. 
Example \ref{ex:Dickman-more-general}; moreover we present the statement of Theorem \ref{main}, i.e. the large deviation 
principle for the sequence of weighted means described in Example \ref{ex:Dickman-more-general}. Section \ref{sec:discussion} 
is devoted to some discussion on Example \ref{ex:Dickman-more-general}: in particular we illustrate that a particular case of 
Example \ref{ex:Dickman-Poisson} can be recovered also as a particular case of Example \ref{ex:Dickman-more-general}, and 
moreover we present Example \ref{Poisson-Poisson} based on suitable Neyman Type A distributed random variables. We conclude 
with Section \ref{sec:final} in which we present some technical results (Section \ref{lemmas}) and the proof of Theorem 
\ref{main} (Section \ref{proof-of-the LDP}).

\section{Preliminaries}\label{Preliminaries}
We start with some preliminaries on large deviations. Moreover, we introduce the notations and 
recall some results of \cite{GiulianoMacciESAIM-PS} that form the basis of the results in this paper.

\subsection{Preliminaries on large deviations}
 We refer to \cite{DemboZeitouni} (pages 4-5). Let $\mathcal{Z}$ be
	a topological space equipped with its completed Borel
	$\sigma$-field. A sequence of $\mathcal{Z}$-valued random
	variables $\{Z_n:n\geq 1\}$ satisfies the large deviation
	principle (LDP for short) with speed function $v_n$ and rate
	function $I$ if: $\lim_{n\to\infty}v_n=\infty$; the function
	$I:\mathcal{Z}\to[0,\infty]$ is lower semi-continuous;
	$$\limsup_{n\to\infty}\frac{1}{v_n}\log P(Z_n\in F)\leq-\inf_{z\in
		F}I(z)\ \mbox{for all closed sets}\ F;$$
	$$\liminf_{n\to\infty}\frac{1}{v_n}\log P(Z_n\in G)\geq-\inf_{z\in
		G}I(z)\ \mbox{for all open sets}\ G.$$
	A rate function $I$ is said to be  {\it good} if its level sets
	$\{\{z\in\mathcal{Z}:I(z)\leq\eta\}:\eta\geq 0\}$ are compact.
	
	\noindent
	Throughout this paper we take $\mathcal{Z}=\mathbb{R}$ and we use the G\"artner--Ellis Theorem 
	(see e.g. Theorem 2.3.6 in \cite{DemboZeitouni}); the application of this theorem consists in checking the existence 
	of the function $\Lambda:\mathbb{R}\to(-\infty,\infty]$ defined by
    $$\Lambda(\theta):=\lim_{n \to \infty} \frac{1}{v_n}\log E[e^{v_n \theta Z_n}]$$
    and, if $\Lambda$ is essentially smooth (see e.g. Definition 2.3.5 in \cite{DemboZeitouni}) and lower semi-continuous, 
    the LDP holds with good rate function  $\Lambda^*: \mathbb{R}\to [0, \infty]$  defined by
  $$\Lambda^*(x) :=\sup_{\theta \in \mathbb{R}}\{\theta x - \Lambda (\theta)\}.$$

\subsection{Notations and brief recall of some results from \cite{GiulianoMacciESAIM-PS}}\label{gen-framework}
As explained in the Introduction, our aim is to consider a class of sequences of random variables $\{Z_n:n\geq 1\}$, 
where each random variable $Z_n$ is a    weighted partial sum of other random variables $ \{W_n  :n\geq 1\} $ (which 
converge weakly  to a Dickman distribution). Thus, first of all, we give the precise definition 
of $Z_n$. In particular throughout this section we often refer to the setting described in terms of the formulas \eqref{eq:conditions-on-phi(n)}--\eqref{eq:pmf-Rn} below.

\paragraph{Setting.} Let $\phi:\mathbb{N}\to [0,\infty)$ be a strictly increasing function such that
\begin{equation}\label{eq:conditions-on-phi(n)}
	\phi(0)=0,\ \lim_{n\to\infty}\phi(n)=\infty,\ \mbox{and}\
	\lim_{n\to\infty}\frac{\phi(n)}{\phi(n+1)}=1,
\end{equation}
and is concave, i.e.
\begin{equation}\label{eq:concave-phi}
	\{\phi(n)-\phi(n-1):n\geq 1\}\ \mbox{is non-increasing}.
\end{equation}
Moreover we define
\begin{equation}\label{eq:def-L(n)}
	L(n):=\sum_{k=1}^n\frac{\phi(k)-\phi(k-1)}{\phi(k)},
\end{equation}
and we assume that
\begin{equation}\label{eq:L(n)-divergent}
	\lim_{n\to\infty}L(n)=\infty.
\end{equation}
Then $\{Z_n:n\geq 1\}$ is the sequence defined as
\begin{equation}\label{eq:def-weighted-mean}
	Z_n:=\frac{1}{L(n)}\sum_{k=1}^n\frac{\phi(k)-\phi(k-1)}{\phi(k)}W_k,
\end{equation}
where $\{W_n:n\geq 1\}$ is a sequence of real valued random variables, and
\begin{equation}\label{eq:def-Wn}
	W_n:=\frac{1}{\phi(n)}\sum_{k=1}^n\phi(k)R_k
\end{equation}
for suitable independent, nonnegative and integer-valued random variables $\{R_n:n\geq 1\}$. In 
particular we use the notation
\begin{equation}\label{eq:pmf-Rn}
	q_n^{(j)}:=P(R_n=j)\quad\mbox{for all}\ j\geq 0.
\end{equation}

\noindent
Throughout this paper we consider the above setting, with some particular choices of the probability mass functions 
in \eqref{eq:pmf-Rn}. A case in which the random variables $\{R_n: n\geq 1\}$ are Poisson distributed and their 
means tend to 0 has been studied in \cite{GiulianoMacciESAIM-PS}; see the following Example \ref{ex:Dickman-Poisson} 
for a rigorous presentation.
 
\begin{example}\label{ex:Dickman-Poisson}
	We consider the above setting and, for every $n\geq 1$, we assume that the random variable $R_n$ is Poisson
	distributed with mean $\lambda_n$. We also assume that the sequence of positive numbers $\{\lambda_n:n\geq 1\}$
	satisfies the following condition:
	\begin{equation}\label{eq:condition-for-limit-law}
		\lim_{n\to\infty}\frac{1}{\phi(n)}\sum_{k=1}^n\phi(k)\lambda_k=\nu,\quad
		\mbox{for some}\ \nu\in(0,\infty).
	\end{equation}
    Note that \eqref{eq:conditions-on-phi(n)} and \eqref{eq:condition-for-limit-law} yield that
    $\lambda_n\to 0$.
\end{example}     
 
\noindent
Then the following results have been proved in \cite{GiulianoMacciESAIM-PS}.

\begin{proposition}[Proposition 2.5 in \cite{GiulianoMacciESAIM-PS}]\label{prop:dickman-poisson-weak-convergence}
	Let $\phi:\mathbb{N}\to [0,\infty)$ be a strictly increasing function such that 
	\eqref{eq:conditions-on-phi(n)} and \eqref{eq:concave-phi} hold. Moreover let $\{L(n):n\geq 1\}$
	be defined by \eqref{eq:def-L(n)}, and assume that \eqref{eq:L(n)-divergent} holds. 
	Let $\{\lambda_n:n\geq 1\}$ and $\{R_n:n\geq 1\}$ be  as in Example \ref{ex:Dickman-Poisson} (in particular recall 
	\eqref{eq:pmf-Rn}).  Finally, let $\nu$ be the constant defined by \eqref{eq:condition-for-limit-law}.
	Then $\{W_n:n\geq 1\}$ defined by \eqref{eq:def-Wn} converges weakly to $D_\nu$ (as $n\to\infty$).
\end{proposition}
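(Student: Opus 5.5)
We prove the weak convergence through moment generating functions. By independence of the Poisson variables $R_k$,
$$\log\mathbb{E}\left[e^{sW_n}\right]=\sum_{k=1}^n\lambda_k\left(e^{s\phi(k)/\phi(n)}-1\right)\quad\mbox{for all}\ s\in\mathbb{R}.$$
The plan is to show that this converges to $\nu\int_0^1\frac{e^{sy}-1}{y}dy$ for every real $s$. By \eqref{eq:dickman-mgf} the limit is the log moment generating function of $D_\nu$. Convergence of moment generating functions on all of $\mathbb{R}$ (in fact on a neighbourhood of the origin suffices) implies weak convergence, so this gives the claim.

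Set $a_k:=\phi(k)\lambda_k$, $A_n:=\sum_{k=1}^n a_k$ and $g(y):=(e^{sy}-1)/y$, with $g(0)=s$; the function $g$ is smooth on $[0,1]$. The sum becomes
$$\sum_{k=1}^n\lambda_k\left(e^{s\phi(k)/\phi(n)}-1\right)=\frac{1}{\phi(n)}\sum_{k=1}^n a_k\, g\!\left(\frac{\phi(k)}{\phi(n)}\right),$$
a Riemann--Stieltjes sum $\int_0^1 g\,d\mu_n$. Here $\mu_n$ is the measure on $[0,1]$ putting mass $a_k/\phi(n)$ at the point $\phi(k)/\phi(n)$. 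The total mass of $\mu_n$ is $A_n/\phi(n)\to\nu$ by \eqref{eq:condition-for-limit-law}. It therefore suffices to show that $\mu_n$ converges weakly to $\nu\,dy$ (Lebesgue measure on $[0,1]$ times $\nu$), since $g$ is bounded and continuous.

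For the weak convergence of $\mu_n$, it is enough to prove that for each fixed $t\in(0,1]$,
$$\mu_n([0,t])=\frac{1}{\phi(n)}\sum_{k:\ \phi(k)\le t\phi(n)}a_k\longrightarrow \nu t .$$
Let $m=m(n,t)$ be the largest index with $\phi(m)\le t\phi(n)$. Since $\phi(n)\to\infty$ and $\phi(m+1)/\phi(m)\to 1$ by \eqref{eq:conditions-on-phi(n)}, we get $m\to\infty$ and $\phi(m)/\phi(n)\to t$. Then
$$\mu_n([0,t])=\frac{A_m}{\phi(m)}\cdot\frac{\phi(m)}{\phi(n)}\to \nu t,$$
again by \eqref{eq:condition-for-limit-law}. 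Convergence of these distribution functions at every $t$, together with convergence of the total masses, gives $\mu_n\Rightarrow\nu\,dy$. Hence $\int_0^1 g\,d\mu_n\to\nu\int_0^1 g(y)dy$, which is the desired limit.

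The main obstacle is the step relating $\phi(m)$ to $t\phi(n)$. It uses exactly the ratio condition in \eqref{eq:conditions-on-phi(n)} to ensure that the grid $\{\phi(k)/\phi(n)\}$ becomes dense in $[0,1]$. Concavity \eqref{eq:concave-phi} and \eqref{eq:L(n)-divergent} are not needed for this statement; they matter only for $Z_n$. The remaining steps are routine: pass from distribution functions to integrals of the continuous $g$ (for instance by summation by parts), and justify that convergence of moment generating functions gives weak convergence.
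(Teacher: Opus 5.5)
Your proof is correct. The paper itself does not prove this proposition: it is quoted from \cite{GiulianoMacciESAIM-PS}, so there is no in-paper argument to match.

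The closest analogue in the paper is the computation in Section~\ref{proof-of-the LDP}. There, limits of exactly this type, $\frac{1}{L(n)}\sum_j(1-\pi_j)e^{\theta(1-\phi(j)/\phi(n+1))}$, are handled in three steps:
\begin{itemize}
\item Abel's summation formula \eqref{eq:Abel-formula} transfers the known asymptotics of the partial sums onto the weights.
\item The resulting expressions are identified as Riemann sums over the grid $\{\phi(j)/\phi(n+1)\}$.
\item Concavity \eqref{eq:concave-phi} is invoked to bound the increments $\phi(j+1)-\phi(j)$, so that the mesh $(\phi(j+1)-\phi(j))/\phi(n+1)$ is uniformly small.
\end{itemize}

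Your route packages the same information more cleanly. You view $\sum_k\lambda_k(e^{s\phi(k)/\phi(n)}-1)$ as $\int_0^1 g\,d\mu_n$ and check the distribution functions $\mu_n([0,t])\to\nu t$ through the index $m(n,t)$. This uses \eqref{eq:condition-for-limit-law} only through $A_m/\phi(m)\to\nu$ along $m=m(n,t)\to\infty$. It uses the ratio condition in \eqref{eq:conditions-on-phi(n)} only to get $\phi(m)/\phi(n)\to t$. So you are right that neither \eqref{eq:concave-phi} nor \eqref{eq:L(n)-divergent} is needed for this statement.

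The Abel/Riemann-sum method is more computational and needs some control of the mesh; yours is shorter and works under weaker hypotheses. The step you call routine, passing from $F_n(t):=\mu_n([0,t])\to\nu t$ to $\int g\,d\mu_n\to\nu\int_0^1 g(y)\,dy$, is immediate. By Fubini, $\int_{[0,1]}g\,d\mu_n=g(1)F_n(1)-\int_0^1F_n(t)g'(t)\,dt$, and bounded convergence applies since $0\le F_n\le F_n(1)\to\nu$. Curtiss' theorem then gives the weak convergence from the convergence of the moment generating functions.
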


\begin{proposition}[Proposition 3.2 in \cite{GiulianoMacciESAIM-PS}]\label{prop:dickman-poisson-LDP}
	Let $\phi:\mathbb{N}\to [0,\infty)$ be a strictly increasing function such that 
	\eqref{eq:conditions-on-phi(n)} and \eqref{eq:concave-phi} hold. Moreover let $\{L(n):n\geq 1\}$
	be defined by \eqref{eq:def-L(n)}, and assume that \eqref{eq:L(n)-divergent} holds. Finally let 
	$\{\lambda_n:n\geq 1\}$ and $\{R_n:n\geq 1\}$ be as in Example \ref{ex:Dickman-Poisson}. Then 
	$\{Z_n:n\geq 1\}$ defined by \eqref{eq:def-weighted-mean} and \eqref{eq:def-Wn} satisfies the 
	LDP with speed function $v_n=L(n)$ and good rate function $I$ defined by
	\begin{equation}\label{eq:dickman-rate-function}
		I(x):=\left\{\begin{array}{ll}
			x\log\frac{x}{\nu}-x+\nu&\ \mbox{if}\ x\geq 0\\
			\infty&\ \mbox{if}\ x<0,
		\end{array}\right.
	\end{equation}
	where $0\log 0=0$.
\end{proposition}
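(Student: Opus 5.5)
Since the rate function in \eqref{eq:dickman-rate-function} is the Legendre transform of $\Lambda(\theta)=\nu(e^\theta-1)$, the plan is to apply the G\"artner--Ellis Theorem with $v_n=L(n)$ and show
$$\lim_{n\to\infty}\frac{1}{L(n)}\log\mathbb{E}\left[e^{L(n)\theta Z_n}\right]=\nu(e^\theta-1)\quad\mbox{for all}\ \theta\in\mathbb{R}.$$
This limit is finite and differentiable everywhere, hence essentially smooth and lower semi-continuous. A direct computation gives $\sup_\theta\{\theta x-\nu(e^\theta-1)\}=x\log\frac{x}{\nu}-x+\nu$ for $x\geq 0$ (with value $\nu$ at $x=0$) and $+\infty$ for $x<0$, which is \eqref{eq:dickman-rate-function}.

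The first step is to rewrite $Z_n$ as a linear combination of the independent $R_j$. Set $a_k:=\frac{\phi(k)-\phi(k-1)}{\phi(k)}$, so that $L(n)=\sum_{k\le n}a_k$. Exchanging the order of summation gives
$$L(n)Z_n=\sum_{k=1}^n\frac{a_k}{\phi(k)}\sum_{j=1}^k\phi(j)R_j=\sum_{j=1}^n\phi(j)c_{j,n}R_j,\qquad c_{j,n}:=\sum_{k=j}^n\frac{a_k}{\phi(k)}.$$
Because the $R_j$ are independent Poisson variables,
$$\log\mathbb{E}[e^{\theta L(n)Z_n}]=\sum_{j=1}^n\lambda_j\bigl(e^{\theta\phi(j)c_{j,n}}-1\bigr).$$

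The second step is to control the coefficients $b_{j,n}:=\phi(j)c_{j,n}$. By monotonicity of $\phi$, $\frac{a_k}{\phi(k)}=\frac{\phi(k)-\phi(k-1)}{\phi(k)^2}\le\frac{1}{\phi(k-1)}-\frac{1}{\phi(k)}$. Telescoping for $j\ge2$ gives $c_{j,n}\le\frac{1}{\phi(j-1)}$, and therefore $b_{j,n}\le\phi(j)/\phi(j-1)$, which tends to $1$ by \eqref{eq:conditions-on-phi(n)}. Also $b_{j,n}\ge0$. A companion lower bound $c_{j,n}\ge\frac1{\phi(j)}-\frac1{\phi(n)}$ (from the same comparison with the reverse inequality shifted by one index) yields $b_{j,n}\ge1-\phi(j)/\phi(n)$. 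So every $b_{j,n}$ lies in $[0,1+o(1)]$.

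For $\theta>0$ this gives the upper bound
$$\sum_j\lambda_j(e^{\theta b_{j,n}}-1)\le\sum_j\lambda_j(e^{\theta(1+\epsilon_j)}-1),$$
with $\epsilon_j\to0$. For the lower bound we use $b_{j,n}\ge1-\phi(j)/\phi(n)$. For $\theta<0$ the same two-sided estimates apply with the inequalities reversed.

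The main obstacle, which I would attack last, is normalization: the sums above must be compared with $L(n)$, and the only hypothesis relating $\lambda$ to $\phi$ is \eqref{eq:condition-for-limit-law}. This involves $\sum_k\phi(k)\lambda_k/\phi(n)\to\nu$, not $\sum\lambda_k/L(n)$. Both the upper and lower bounds should therefore be rewritten through $\phi$-weights. The natural attempt is to write $e^{\theta b_{j,n}}-1=\sum_{k\ge j}(\text{increment})$ and do a summation by parts: express $\sum_j\lambda_j g(b_{j,n})$ as $\sum_k\frac{a_k}{\phi(k)}\sum_{j\le k}\phi(j)\lambda_j\,(\cdots)$. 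Then \eqref{eq:condition-for-limit-law} turns the inner sum into $\approx\nu$. This gives, in the form of a Ces\`aro/Toeplitz argument with weights $a_k/L(n)$, which is valid by \eqref{eq:L(n)-divergent},
$$\frac1{L(n)}\sum_{k=1}^na_k\cdot\frac{1}{\phi(k)}\sum_{j\le k}\phi(j)\lambda_j\,\frac{e^{\theta b_{j,n}}-1}{b_{j,n}}\cdots\longrightarrow\nu(e^\theta-1).$$
Concretely, I expect to use the linear case (which is exact: $\mathbb{E}[L(n)Z_n]/L(n)=\frac1{L(n)}\sum_k a_kw_k\to\nu$, where $w_k:=\frac1{\phi(k)}\sum_{j\le k}\phi(j)\lambda_j\to\nu$) as a template. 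The nonlinear factor is then handled by a two-sided comparison $e^{\theta b}-1\approx b(e^\theta-1)$ on the region $\phi(j)/\phi(n)\le\delta$, where $b_{j,n}\approx1$. The remaining indices with $\phi(j)>\delta\phi(n)$ contribute at most $O(\log(1/\delta))$, because their $k$-range has $\sum a_k\lesssim\log(\phi(n)/\phi(j))$, concavity \eqref{eq:concave-phi} being used to compare sums with logarithms. This contribution is $o(L(n))$. Finally let $\delta\to0$.
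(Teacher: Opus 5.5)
Your argument is correct, but after the common opening it takes a genuinely different route from the paper. The paper does not reprove this proposition. Its proof of Theorem~\ref{main} is modelled on the original proof and covers the case $\lambda_n=1-\pi_n$. It shares with you three ingredients: the G\"artner--Ellis reduction, the interchange of sums giving the coefficients $\phi(j)s_{j,n}$ (your $b_{j,n}$), and the sandwich $1-\frac{\phi(j)}{\phi(n+1)}\le\phi(j)s_{j,n}\le\frac{\phi(j)}{\phi(j-1)}$. The paper then shows that \emph{each} sandwich sum, divided by $L(n)$, tends to $\nu(e^\theta-1)$:
the upper one via the unweighted average $\frac1{L(n)}\sum_{k\le n}\lambda_k\to\nu$ (Lemma~\ref{lem:dickman-LDP}(ii));
the lower one via Abel summation against $\sum_{k\le j}\lambda_k\sim\nu\log\phi(j)$, together with $L(n)\sim\log\phi(n)$, a second-order expansion of $1-e^{-x}$, and Riemann sums for $\int_0^1e^{-\theta x}dx$ and $\int_0^1\log x\,e^{-\theta x}dx$.

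You never leave the $\phi$-weighted form of \eqref{eq:condition-for-limit-law}. The exact identity $\sum_j\lambda_jb_{j,n}=\sum_ka_kw_k$ plus Toeplitz gives the linear term $\sim\nu L(n)$. The nonlinearity is then removed by localization. On $\{j>n_0:\phi(j)\le\delta\phi(n)\}$ the sandwich gives $b_{j,n}\in[1-\delta,1+\epsilon]$, so continuity of $b\mapsto(e^{\theta b}-1)/b$ at $b=1$ applies. The window $\phi(j)>\delta\phi(n)$ is bounded by $C_\theta\sum_{k>j_\delta}a_kw_k\le C_\theta\sup_kw_k\,(\log\frac1\delta+o(1))$.

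What each route buys: yours is shorter and needs neither Lemma~\ref{lem:dickman-LDP}(ii) nor any explicit integral. It also shows transparently why the answer is $\nu(e^\theta-1)$: the mean $\sum_j\lambda_jb_{j,n}\sim\nu L(n)$ sits, up to $O_\delta(1)$, on indices whose effective tilt is $\approx\theta$. The paper's route in exchange only needs the unweighted partial sums $\sum_{k\le j}(1-\pi_k)$, which is the natural information in the setting of Theorem~\ref{main}. Yours would transfer there too, after checking $\frac1{\phi(n)}\sum_{k\le n}\phi(k)(1-\pi_k)\to\nu$ (the Ces\`aro computation in Section~\ref{sec:discussion}).

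Two bookkeeping points:
\begin{enumerate}
\item Your summation-by-parts display with dots is only heuristic; what closes the proof is the localization. There you must use $|e^{\theta b}-1|\le C_\theta b$ on the bounded range of $b_{j,n}$, not mere boundedness, so that the window is controlled by the interchange identity restricted to $k>j_\delta$. You should also note that the indices $j\le n_0$ contribute only $O(1)$.
\item Concavity is what yields the lower bound on $b_{j,n}$, via $a_k\ge a_{k+1}$. By contrast, $\sum_{k=j+1}^na_k\le\log\frac{\phi(n)}{\phi(j)}$ is just $1-x\le-\log x$ and needs no concavity.
\end{enumerate}
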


\section{A generalized framework and the statement of Theorem \ref{main}} \label{framework}
The aim of this paper is to consider a generalized framework for the setting above (Section \ref{gen-framework}).
In particular we consider a wide class of distributions for the random variables $\{R_n:n\geq 1\}$ by 
referring to their probability mass functions (see \eqref{eq:pmf-Rn}) and their moment generating functions.
A precise description will be given in Example \ref{ex:Dickman-more-general} in which all the items are
expressed in terms of the function $\phi$ and a sequence $\{\pi_n:n\geq 1\}$ that tends to 1 (as $n\to\infty$).
This class of distributions encompasses a special case of Example \ref{ex:Dickman-Poisson} studied in 
\cite{GiulianoMacciESAIM-PS}, and allows other cases; among them there is a particular case with
\emph{Neyman Type A} distributed random variables $\{R_n:n\geq 1\}$ (the Neyman Type A distribution is a particular
compound Poisson distribution) which will be studied in detail.

\smallskip
\noindent
Our interest in the generalized framework of this paper is motivated by the following weak convergence result 
in the literature, which can be formulated in terms of the setting in Section \ref{gen-framework}; in particular
the authors prove a weak convergence to a  Dickman distribution as in \cite{GiulianoMacciESAIM-PS} (see Proposition \ref{prop:dickman-poisson-weak-convergence}).

\begin{theorem}[Theorem 3.3 in \cite{BhattacharjeeMolchanov}]\label{BM}
   For a monotone sequence of positive numbers $\{\phi(n):n\geq 0\}$ increasing to infinity with 
   $\lim_{n \to \infty}   \frac{\phi(n-1)}{\phi(n)}  = 1$, let $\{R_n:n\geq 1\}$ be independent 
   and nonnegative integer-valued random variables, and, recalling  \eqref{eq:pmf-Rn}, assume that
   $$q_n^{(0)} =  \left(\frac{\phi(n-1)}{\phi(n)}\right)^\nu \qquad \hbox{and} \qquad  q_n^{(1)}=q_n^{(0)}(1-q_n^{(0)})$$
   for some $\nu > 0$. Assume in addition that 
   $ \mathbb{E}[R_n]=\mathcal{O}( q_n^{(1)} )$. Then the sequence $\{W_n:n\geq 1\}$ in \eqref{eq:def-Wn} converges weakly 
   to $D_\nu$ (as $n\to\infty$). 
\end{theorem}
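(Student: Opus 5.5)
We prove the weak convergence by showing that the moment generating functions of $W_n$ converge pointwise to \eqref{eq:dickman-mgf}. Since the Dickman law is determined by its moment generating function, finite on all of $\mathbb{R}$, this suffices; it is enough to work with $s$ in a neighbourhood of $0$, or with $s\le 0$ (Laplace transforms of nonnegative variables).

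By independence,
$$\log\mathbb{E}[e^{sW_n}]=\sum_{k=1}^n\log\mathbb{E}\Big[e^{s\frac{\phi(k)}{\phi(n)}R_k}\Big]=\sum_{k=1}^n\log\Big(q_k^{(0)}+q_k^{(1)}e^{st_{k,n}}+\sum_{j\ge 2}q_k^{(j)}e^{jst_{k,n}}\Big),\qquad t_{k,n}:=\frac{\phi(k)}{\phi(n)}\in(0,1].$$
Set $p_k:=1-q_k^{(0)}$. Since $\phi(k-1)/\phi(k)\to1$, we have $p_k\to0$. Moreover
$$p_k=1-\Big(\frac{\phi(k-1)}{\phi(k)}\Big)^\nu\sim\nu\log\frac{\phi(k)}{\phi(k-1)}.$$
By hypothesis $q_k^{(1)}=(1-p_k)p_k$, so
$$P(R_k\ge2)=p_k-q_k^{(1)}=p_k^2.$$
We split the summand inside the logarithm as
$$1+p_k(e^{st}-1)+\Big[-p_k^2(e^{st}-1)+\sum_{j\ge2}q_k^{(j)}(e^{jst}-1)\Big].$$
The bracket should be negligible. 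For $s\le0$ each term $|e^{jst}-1|\le 1$, so the bracket is $O(p_k^2)$. For $s$ in a small neighbourhood of $0$ we would instead use $|e^{jst}-1|\le j|s|e^{j|s|}$ together with the moment hypothesis. The hypothesis $\mathbb{E}[R_k]=O(p_k)$ controls $\sum_{j\ge2}jq_k^{(j)}$ only to order $p_k$, not $p_k^2$. So for $s\le0$ we rely on the bound $P(R_k\ge2)=p_k^2$, and we use the moment hypothesis mainly for tightness/uniform integrability. I would therefore work with Laplace transforms ($s\le0$), where all error terms are bounded by $p_k^2$.

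With $\log(1+x)=x+O(x^2)$, we get
$$\log\mathbb{E}[e^{sW_n}]=\sum_{k=1}^n p_k(e^{st_{k,n}}-1)+O\Big(\sum_{k=1}^n p_k^2\Big)\cdot C.$$
The main obstacle is the error term, because $\sum_k p_k^2$ need not be small; it is only $o(\sum_k p_k)$, and $\sum_k p_k\sim\nu\log\phi(n)\to\infty$. The fix is that the relevant sums are weighted by $|e^{st_{k,n}}-1|\le|s|t_{k,n}$. The error is then bounded by
$$\sum_k p_k^2\,\min(1,|s|t_{k,n})\le \max_{k\le n}\big(p_k\wedge\ldots\big)\cdot\sum_k p_k\,|s|t_{k,n}.$$
To make this precise, I would split at $k\le K$ and $k>K$. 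The early terms have $t_{k,n}\to0$, so they vanish. The late terms have $p_k\le\varepsilon$, so they contribute at most $\varepsilon\sum_k p_k t_{k,n}$.

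It remains to show that $\sum_k p_k t_{k,n}$ is bounded and that $\sum_k p_k(e^{st_{k,n}}-1)\to\nu\int_0^1\frac{e^{sy}-1}{y}dy$. This is a Riemann–Stieltjes argument. With $u_k:=\log\phi(k)$ we have $p_k\approx\nu(u_k-u_{k-1})$, and the mesh $u_k-u_{k-1}\to0$. Hence
$$\sum_k \nu(u_k-u_{k-1})\,f\big(e^{u_k-u_n}\big)\to\nu\int_{-\infty}^0 f(e^{v})\,dv=\nu\int_0^1\frac{f(y)}{y}\,dy$$
for $f(y)=e^{sy}-1$ or $f(y)=y$. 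This is integrable near $y=0$ since $|f(y)|\le Cy$. The tail $v\to-\infty$ is handled by the bound $|f(e^v)|\le Ce^v$, and replacing $p_k$ by $\nu(u_k-u_{k-1})$ costs only a factor $1+o(1)$.

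Together these give $\mathbb{E}[e^{sW_n}]\to\exp\big(\nu\int_0^1\frac{e^{sy}-1}{y}dy\big)$ for all $s\le0$, and the continuity theorem for Laplace transforms of nonnegative variables yields $W_n\to D_\nu$ weakly.
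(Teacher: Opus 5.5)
The paper does not prove this statement; it quotes it from Bhattacharjee--Molchanov as motivation, so your argument has to stand on its own. Its architecture is sound: Laplace transforms for $s\le 0$, the expansion $\log\mathbb{E}[e^{st_{k,n}R_k}]=p_k(e^{st_{k,n}}-1)+\text{error}$, and the Riemann-sum limit in the variable $\log\phi$. But the error estimate has a genuine gap, and it sits exactly where the hypothesis $\mathbb{E}[R_n]=\mathcal{O}(q_n^{(1)})$ must enter.

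You bound the total error by $\sum_k p_k^2\min(1,|s|t_{k,n})$ while relying only on $P(R_k\ge 2)=p_k^2$. The factor $\min(1,|s|t_{k,n})$ is legitimate for the term $-p_k^2(e^{st}-1)$ and for the $O(x^2)$ remainder. It is not legitimate for $\sum_{j\ge2}q_k^{(j)}(e^{jst}-1)$. There the weight is $|e^{jst}-1|$, which is close to $1$ whenever $j|s|t$ is large, however small $|s|t$ is. From $P(R_k\ge2)=p_k^2$ alone you get only the unweighted bound $p_k^2$ per term, and $\sum_k p_k^2$ need not vanish.

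Your bound is in fact false without the moment hypothesis. Take $\phi(k)=e^{\sqrt k}$, so $p_k\sim\nu/(2\sqrt k)$, and let $R_k=M_k:=\lceil e^{2\sqrt k}\rceil$ with probability $p_k^2$. All other hypotheses hold, and $t_{k,n}M_k\ge e^{(3/\sqrt2-1)\sqrt n}$ for $n/2\le k\le n$. Hence for $s<0$ the $j\ge 2$ contribution is at least $\sum_{n/2\le k\le n}p_k^2\bigl(1-e^{-|s|t_{k,n}M_k}\bigr)\to\frac{\nu^2}{4}\log 2$ in absolute value, while $\sum_k p_k^2\min(1,|s|t_{k,n})\to0$. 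Moreover $\liminf_n P(W_n>A)\ge 1-2^{-\nu^2/4}$ for every $A$, so $\{W_n\}$ is not even tight. You invoke the moment condition only for ``tightness/uniform integrability'', which the continuity theorem for Laplace transforms already supplies. So your argument as written would prove this false statement.

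The repair is short. For $s\le 0$ use $|e^{jst}-1|\le\min\{1,j|s|t\}$, so that
\[
\sum_{j\ge2}q_k^{(j)}\bigl|e^{jst_{k,n}}-1\bigr|\le\min\bigl\{P(R_k\ge2),\,|s|t_{k,n}\,\mathbb{E}[R_k]\bigr\}\le C\,p_k\min\{p_k,|s|t_{k,n}\},
\]
where $C\ge1$ is such that $\mathbb{E}[R_k]\le Cq_k^{(1)}\le Cp_k$ for large $k$. Fix $K$ with $p_k\le\varepsilon$ for $k>K$. Your Riemann-sum lemma applied to $f(y)=\min\{\varepsilon,|s|y\}$ (continuous, with $|f(y)|\le|s|y$) then gives, for $0<\varepsilon<|s|$,
\[
\limsup_{n\to\infty}\sum_{K<k\le n}p_k\min\{p_k,|s|t_{k,n}\}\le\lim_{n\to\infty}\sum_{k=1}^n p_k\min\{\varepsilon,|s|t_{k,n}\}=\nu\int_0^1\min\{\varepsilon,|s|y\}\frac{dy}{y}=\nu\varepsilon\Bigl(1+\log\frac{|s|}{\varepsilon}\Bigr),
\]
which tends to $0$ as $\varepsilon\downarrow 0$. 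The indices $k\le K$ vanish as you describe, and with this change your proof is complete.

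For comparison, in the proof of Theorem \ref{main} the analogous remainder is bounded without any weighting, by $2C_\theta(1-q_j^{(0)})^2$ via condition (c). That suffices there only because the sum is divided by $L(n)\to\infty$ and \eqref{eq:estimate5} makes it $o(L(n))$. Weak convergence instead needs an $o(1)$ error, which is why the weighting, and hence the moment hypothesis, is indispensable here.
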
 

\smallskip
\noindent
Now we are ready to present the generalized framework studied in this paper.
  
\begin{example}\label{ex:Dickman-more-general}
	We consider the setting above (Section \ref{gen-framework}). Moreover let $\{\pi_n:n\geq 1\}$ be the sequence
    defined by
	\begin{equation}\label{eq:def-pi(k)}
		\pi_k:=\left(\frac{\phi(k-1)}{\phi(k)}\right)^\nu\quad(\mbox{for all}\ k\geq 1)\quad
		\mbox{for some}\ \nu\in(0,\infty).
	\end{equation}
    Moreover, we assume that:
	\begin{itemize}
		  \item [(a)]  for a sequence of functions $f_n:[0,1]\to[0,1]$ such that, uniformly in $n$
		\begin{equation}\label{eq:condition-function-f}
			\lim_{x\uparrow 1}f_n(x)=f_n(1)=1\quad \mbox{and}\quad \lim_{x\uparrow 1}\frac{f_n(x)-1}{x-1}=1,
		\end{equation}
	    we have
		\begin{equation}\label{eq:pmf-condition-0-and-1}
			q_k^{(0)} =f_k(\pi_k)\quad\mbox{and}\quad q_k^{(1)}=f_k(\pi_k)(1-\pi_k);
		\end{equation}
        \item[(b)] $\mathbb{E}[e^{\theta R_n}]=\sum_{h\geq 0}e^{\theta h}q_n^{(h)}<\infty$ for every $\theta>0$ and for every $n\geq 1$;
        \item[(c)] for every $\theta>0$, there exists $C_\theta>0$ and an integer $k_\theta\geq 1$ such that 
            \begin{equation}\label{eq:UB-quadratic}
        	0\leq\sup_{0<|\tau|\leq \theta }\Big\{\mathbb{E}[e^{\tau R_k}]-(q_k^{(0)}+e^\tau q_k^{(1)}) \Big\}\leq 
        	C_\theta(1-q_k^{(0)})^2\quad\mbox{for every}\ k>k_\theta.
        \end{equation}
   \end{itemize}
\end{example}

To motivate Example \ref{ex:Dickman-more-general}, we note that it provides a generalization of the function $f$ from Example \ref{ex:Dickman-Poisson} (see \eqref{eq:Dickman-more-general-conditions} below). Consequently, it allows us to consider new cases, such as the one involving \emph{Neyman Type A} distributed random variables introduced in Section \ref{poissonpoisson}. Moreover, Condition (c) in Example \ref{ex:Dickman-more-general} indicates that we are studying scenarios where, in a sense, the moment generating function $\mathbb{E}[e^{\tau R_k}]$ can be approximated by the sum of its first two terms, i.e., $q_k^{(0)}+e^\tau q_k^{(1)}$.

We also remark that, as already pointed out, for the sequence defined by \eqref{eq:def-pi(k)} 
we have $\pi_n\to 1$ (as $n\to\infty$) by \eqref{eq:conditions-on-phi(n)}. Then, for a future reference, we notice 
that Condition (a) in Example \ref{ex:Dickman-more-general} implies that
\begin{equation}\label{limiti}
	\lim_{n \to \infty}f_n(\pi_n)=1 \qquad {\it and} \qquad \lim_{n \to \infty}\frac{f_n(\pi_n)-1}{\pi_n-1}=1.
\end{equation}
This can be verified in a standard manner; the details are omitted.

We conclude this section with the statement of the main result in this paper, which is the analogous of Proposition 
3.2 in \cite{GiulianoMacciESAIM-PS} for Example \ref{ex:Dickman-more-general}.

\begin{theorem}\label{main}
	Consider the situation in Example \ref{ex:Dickman-more-general}.
	Then $\{Z_n:n\geq 1\}$  defined by \eqref{eq:def-weighted-mean} and \eqref{eq:def-Wn} satisfies the LDP 
	with speed function $v_n =L(n)$ and good rate function $I$ defined by 
	\eqref{eq:dickman-rate-function}.
\end{theorem}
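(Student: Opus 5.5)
We apply the G\"artner--Ellis Theorem with $v_n=L(n)$. The target is
$$\Lambda(\theta):=\lim_{n\to\infty}\frac{1}{L(n)}\log\mathbb{E}\big[e^{L(n)\theta Z_n}\big]=\nu(e^\theta-1)\quad\mbox{for all}\ \theta\in\mathbb{R}.$$
This $\Lambda$ is finite and differentiable everywhere, so it is essentially smooth and lower semicontinuous. Its Legendre transform is exactly $I$ in \eqref{eq:dickman-rate-function}, the Poisson rate function, which is also what appears in Proposition \ref{prop:dickman-poisson-LDP}. So the proof reduces to computing $\Lambda$.

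First we rewrite $Z_n$ by exchanging sums. Set $a_k:=\frac{\phi(k)-\phi(k-1)}{\phi(k)}$. Then
$$L(n)Z_n=\sum_{k=1}^n a_k\frac{1}{\phi(k)}\sum_{j=1}^k\phi(j)R_j=\sum_{j=1}^n \phi(j)c_{j,n}R_j,\qquad c_{j,n}:=\sum_{k=j}^n\frac{a_k}{\phi(k)}.$$
Concavity \eqref{eq:concave-phi} gives $a_k/\phi(k)\le \int_{\phi(k-1)}^{\phi(k)}t^{-2}dt$, so $\phi(j)c_{j,n}\le \phi(j)/\phi(j-1)$. This ratio is bounded for $j\ge2$ and tends to $1$ by \eqref{eq:conditions-on-phi(n)}. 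Similarly, a lower bound gives $\phi(j)c_{j,n}\ge 1-\phi(j)/\phi(n)-o(1)$. Define $b_{j,n}:=\phi(j)c_{j,n}\in[0,B]$. By independence,
$$\log\mathbb{E}[e^{\theta L(n)Z_n}]=\sum_{j=1}^n\log\mathbb{E}[e^{\theta b_{j,n}R_j}].$$
Condition (b) makes every term finite.

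Next we approximate each factor. Fix $\theta$. For $j>k_{|\theta|B}$, condition (c) with $\tau=\theta b_{j,n}$ gives
$$\mathbb{E}[e^{\tau R_j}]=1-(1-q_j^{(0)})+(e^\tau-1)q_j^{(1)}+q_j^{(0)}+q_j^{(1)}-1+O\big((1-q_j^{(0)})^2\big).$$
By \eqref{eq:pmf-condition-0-and-1} and \eqref{limiti}, $1-q_j^{(0)}=1-f_j(\pi_j)\sim 1-\pi_j$ and $q_j^{(1)}=f_j(\pi_j)(1-\pi_j)\sim 1-\pi_j$. Moreover $1-\pi_j=1-(\phi(j-1)/\phi(j))^\nu\sim\nu a_j$. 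Two consequences follow:
\begin{itemize}
\item $1-q_j^{(0)}-q_j^{(1)}=o(a_j)$;
\item since $\mathbb{E}[e^{\tau R_j}]\ge q_j^{(0)}+e^\tau q_j^{(1)}$, the mass at $h\ge2$ is $O(a_j^2)$ uniformly in $|\tau|\le|\theta| B$ (taking $\tau\to0$ handles this).
\end{itemize}
Hence $\log\mathbb{E}[e^{\theta b_{j,n}R_j}]=\nu a_j(e^{\theta b_{j,n}}-1)(1+o(1))+O(a_j^2)$, using $\log(1+x)=x+O(x^2)$. The $O(a_j^2)$ terms are summable relative to $L(n)$, since $a_j\to0$ forces $\sum_{j\le n}a_j^2=o(L(n))$. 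The finitely many $j\le k_{|\theta|B}$ contribute $O(1)$. Therefore
$$\Lambda(\theta)=\lim_n\frac{\nu}{L(n)}\sum_{j=1}^n a_j\big(e^{\theta b_{j,n}}-1\big).$$

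The main obstacle is this last limit. The weights $b_{j,n}$ are close to $1$ only when $\phi(j)\ll\phi(n)$; for $j$ near $n$ we have $b_{j,n}\approx 1-\phi(j)/\phi(n)$. The plan is to show that the indices with $\phi(j)>\varepsilon\phi(n)$ carry negligible $a$-weight. By concavity their total weight is
$$\sum_{j:\phi(j)>\varepsilon\phi(n)}a_j\le\log\frac{\phi(n)}{\varepsilon\phi(n)}+o(1)=\log(1/\varepsilon)+o(1),$$
which is $o(L(n))$. On the remaining indices, $b_{j,n}\in[1-\varepsilon-o(1),\,1+o(1)]$, using $\phi(j)/\phi(j-1)\to1$ and excluding a fixed finite initial set. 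Combining this with $e^{\theta b}$ bounded on $[0,B]$ and letting $\varepsilon\to0$ gives $\Lambda(\theta)=\nu(e^\theta-1)$. The G\"artner--Ellis Theorem then completes the proof.
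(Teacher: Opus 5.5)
Your proof is correct, and its decisive step takes a different route from the paper's.

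\textbf{The paper's route.} It sandwiches $\phi(j)s_{j,n}$ between $1-\frac{\phi(j)}{\phi(n+1)}$ and $\frac{\phi(j)}{\phi(j-1)}$ (Lemma \ref{lem:dickman-LDP}) and proves the limit separately for the two comparison sums. It splits each log-moment generating function into $\log q_j^{(0)}$, $\log\big(1+e^\tau(1-\pi_j)\big)$, and a remainder controlled by (c). It then identifies $\frac{1}{L(n)}\sum_j(1-\pi_j)e^{\theta(1-\phi(j)/\phi(n+1))}\to\nu e^\theta$ through Abel summation, $L(n)\sim\log\phi(n)$, and two Riemann sums on $[0,1]$, using concavity to bound $\phi(j+1)-\phi(j)$.

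\textbf{Your route.} You keep the true weights $b_{j,n}$ and linearize each term once, to $\nu a_j(e^{\theta b_{j,n}}-1)$ plus an error that is $o(a_j)$ uniformly in $n$. You then replace the whole Abel/Riemann computation by one observation. The indices with $\phi(j)>\varepsilon\phi(n)$ carry $a$-weight at most $\log(1/\varepsilon)+o(1)$, from $a_j\le\log\frac{\phi(j)}{\phi(j-1)}$; this is bounded while $L(n)\to\infty$. On the remaining indices $b_{j,n}$ lies within $\varepsilon+o(1)$ of $1$.

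\textbf{What each buys.} Your argument makes it transparent that only the value $b=1$ matters. It also handles $\theta<0$ and the initial block directly; the paper's bound for the initial block has lower bound $0$, which is valid only for $\theta>0$. As far as I can see, your argument does not need concavity at all. For large $j$ the ratio condition alone gives $b_{j,n}\ge(1-\delta)\big(\frac{\phi(j)}{\phi(j-1)}-\frac{\phi(j)}{\phi(n)}\big)$, and your tail-weight bound uses only $\frac{\phi(j-1)}{\phi(j)}\to1$. So the appeals to concavity in your write-up can be dropped. The paper's route mainly has the merit of following the proof of Proposition \ref{prop:dickman-poisson-LDP} line by line.

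\textbf{Two small slips.}
\begin{itemize}
\item Your displayed expansion of $\mathbb{E}[e^{\tau R_j}]$ is garbled; as written its right-hand side sums to $2q_j^{(0)}+e^\tau q_j^{(1)}-1$. It should read $1+(e^\tau-1)q_j^{(1)}-(1-q_j^{(0)}-q_j^{(1)})+O\big((1-q_j^{(0)})^2\big)$.
\item The multiplicative form $(1+o(1))$ is justified by your second bullet, not by the first. That bullet is the $O(a_j^2)$ bound on the mass at $h\ge2$, obtained from (c) by letting $\tau\to0$. An additive $o(a_j)$ error is all you need and is the cleaner way to state it.
\end{itemize}
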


\noindent Theorem \ref{main} will be proved in Section \ref{sec:final}.

\section{Some discussion on Example \ref{ex:Dickman-more-general}}\label{sec:discussion}
In this section we discuss some aspects of Example \ref{ex:Dickman-more-general}. In particular we illustrate
some connections with Example \ref{ex:Dickman-Poisson} (Section \ref{particular case }), and we present a specific case 
in which the random variables $\{R_n:n\geq 1\}$ are \emph{Neyman Type A} distributed random variables (see Example 
\ref{Poisson-Poisson} in Section \ref{poissonpoisson}).

\subsection{A particular case of Example \ref{ex:Dickman-Poisson}}\label{particular case }
In this section we explain how a particular case of Example \ref{ex:Dickman-Poisson} can be recovered also as a particular case 
of Example \ref{ex:Dickman-more-general}. This can be done by considering Example \ref{ex:Dickman-Poisson} with
$$\lambda_n=1- \pi_n,$$
and by taking Example \ref{ex:Dickman-more-general} with 
\begin{equation}\label{eq:Dickman-more-general-conditions}
	f_n(x)=e^{x-1}\ \mbox{for all}\ n\geq 1 
\end{equation}
(in particular $\lambda_n\to 0$ as $n\to\infty$ because, as remarked before, $\pi_n\to 1$). Note that $f_n$ does not depend on $n$ 
(i.e. $\{f_n:n\geq 1\}$ is a constant sequence of functions) and, moreover, \eqref{eq:condition-for-limit-law} holds because,
by Cesaro Theorem, we have
$$\lim_{n\to\infty}\frac{1}{\phi(n)}\sum_{k=1}^n\phi(k)\lambda_k=
\lim_{n\to\infty}\frac{\phi(n)(1-\pi_n)}{\phi(n)-\phi(n-1)}
=\lim_{n\to\infty}\frac{1-\left(\frac{\phi(n-1)}{\phi(n)}\right)^\nu}{1-\frac{\phi(n-1)}{\phi(n)}}=\nu.$$
Let us check conditions (a), (b) and (c) of Example \ref{ex:Dickman-more-general}. For condition (a)
it is easy to check \eqref{eq:condition-function-f} (we omit the details), and \eqref{eq:pmf-condition-0-and-1} holds 
because the definitions of $f_n$ and $\pi_n$ in \eqref{eq:Dickman-more-general-conditions} yield $q_n^{(0)}=e^{-\lambda_n}$
and $q_n^{(1)}=\lambda_ne^{-\lambda_n}$. Condition (b) holds because we have
$\mathbb{E}[e^{\theta R_n}]=e^{\lambda_n(e^\theta-1)}<\infty$ for every $\theta>0$ (actually for every $\theta\in\mathbb{R}$).
For condition (c) some further work is needed, and we  prove  the following lemma. 

\begin{lemma}\label{lem:uniform-limit}
	For every $\alpha_0 >0$ we have
	$$\lim_{x \to 0}\frac{e^{x\alpha}-1-x\alpha  }{ \alpha^2(1-e^{-x})^2}= \frac{1}{2}   .$$
	uniformly in $0< \alpha \leq \alpha_0$.
\end{lemma}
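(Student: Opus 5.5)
Write the ratio as a product of two factors:
$$\frac{e^{x\alpha}-1-x\alpha}{\alpha^2(1-e^{-x})^2}=\frac{e^{x\alpha}-1-x\alpha}{(x\alpha)^2}\cdot\left(\frac{x}{1-e^{-x}}\right)^2 .$$
This is valid for $x\neq 0$ and $\alpha>0$. The second factor does not depend on $\alpha$. It tends to $1$ as $x\to 0$ because $(1-e^{-x})/x\to 1$, so its convergence is trivially uniform in $\alpha$.

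All the work is therefore in the first factor. Set $g(u):=(e^u-1-u)/u^2$ for $u\neq 0$ and $g(0):=1/2$. From the power series we have
$$g(u)=\sum_{k\geq 2}\frac{u^{k-2}}{k!},$$
so $g$ is continuous (indeed entire) with $g(0)=1/2$. The first factor is $g(x\alpha)$. For $0<\alpha\leq\alpha_0$ we have $|x\alpha|\leq |x|\alpha_0$. Hence
$$\sup_{0<\alpha\leq\alpha_0}\Big|g(x\alpha)-\tfrac12\Big|\leq\sup_{|u|\leq |x|\alpha_0}\Big|g(u)-\tfrac12\Big|,$$
and the right-hand side tends to $0$ as $x\to 0$ by continuity of $g$ at $0$. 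To make this concrete, for $|u|\leq 1$ the series gives $|g(u)-1/2|\leq |u|\sum_{k\geq 3}1/k!\leq |u|$.

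Combining the two factors: $g(x\alpha)$ is bounded uniformly, say by $1$, once $|x|\alpha_0\leq 1$. The product of a uniformly convergent, uniformly bounded factor with a factor converging independently of $\alpha$ then converges uniformly to $\frac12\cdot 1=\frac12$. The only step needing care is the observation that uniformity in $\alpha$ comes from the bound $|x\alpha|\leq|x|\alpha_0$. This is precisely where the restriction $\alpha\leq\alpha_0$ is used. There is no real obstacle: the expansion of $g$ handles $\alpha$ near $0$ automatically, since there $g(x\alpha)$ is simply close to $g(0)$.
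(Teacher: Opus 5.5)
Your proof is correct and follows the paper's route: the same factorization into $g(x\alpha)\cdot\bigl(x/(1-e^{-x})\bigr)^2$, and the same reduction to uniformity of $g(x\alpha)\to\tfrac12$ via $|x\alpha|\le|x|\alpha_0$. The only difference is that you control $g$ near $0$ by continuity, using the explicit power-series bound $|g(u)-\tfrac12|\le|u|$, whereas the paper uses the monotonicity sandwich $0\le g(x\alpha)-\tfrac12\le g(x\alpha_0)-\tfrac12$. Your version also covers $x\to0^-$ (the paper treats only $x>0$, which is all its application with $x=\lambda_k$ needs), and it spells out the final product step that the paper leaves implicit.
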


\begin{proof}
	Noting that
	$$ \frac{e^{x\alpha}-1-x\alpha  }{ \alpha^2(1-e^{-x})^2}=  \frac{e^{x\alpha}-1-x\alpha  }{ (\alpha x)^2(\frac{1-e^{-x}}{x} )^2} $$
	and recalling the relation  $\lim_{x \to 0}\frac{1-e^{-x}}{x} =1$, it suffices to prove that, for every $\alpha_0 >0$, we have
	$$\lim_{x \to 0}\frac{e^{x\alpha}-1-x\alpha  }{(x\alpha)^2}= \frac{1}{2} $$
	uniformly in $0< \alpha \leq \alpha_0$. Indeed, if we consider the function (defined on $(0,\infty)$)
	$$u \mapsto g(u) := \frac{e^{u}-1-u  }{u^2},$$
	we can say that $g(u)\geq \frac{1}{2}$ and is increasing in a neighborhood of $u=0$ (since its derivative is
	$$g^\prime(u)= \frac{u e^u + u -2 e^u +2}{u^3},$$
	which converges to $\frac{1}{6}$ as $u\to 0$). Hence for $0< \alpha \leq \alpha_0$ and $x>0$ small enough we have
	$$0\leq g(x\alpha)-\frac{1}{2} \leq  g(x\alpha_0)-\frac{1}{2} ,$$
	whence 
	$$0 \leq \lim_{x \to 0}\frac{e^{x\alpha}-1-x\alpha -\frac{1}{2}(x\alpha)^2}{(x\alpha)^2}\leq  \lim_{x \to 0}\frac{e^{x\alpha_0}-1-x\alpha_0 -\frac{1}{2}(x\alpha_0)^2}{(x\alpha_0)^2}=0.$$
\end{proof}

\noindent
Now we are in a position to check condition (c) in Example \ref{ex:Dickman-more-general}. First, for every 
$\tau\in \mathbb{R}$  we have
	\begin{align*}
		0&\leq\mathbb{E}[e^{\tau R_k}]-(q_k^{(0)}+e^\tau q_k^{(1)})=e^{\lambda_k(e^\tau-1)}-e^{-\lambda_k}-\lambda_ke^{-\lambda_k}e^\tau\\
		&=e^{-\lambda_k}(e^{\lambda_ke^\tau}-1-\lambda_ke^\tau)\leq e^{\lambda_ke^\tau}-1-\lambda_ke^\tau.
	\end{align*} 
Now we take $\theta \geq  |\tau| $ and use Lemma \ref{lem:uniform-limit} with $\alpha = e^\tau,  \alpha_0=e^\theta,  
\lambda_k =x$; then, for fixed $\epsilon >0$, there exists an integer $k_\theta$
such that, for $k>k_\theta$ (in what follows $C_\theta=\Big(\frac{1}{2}+ \epsilon\Big)e^{2\theta}$)
$$0 \leq e^{\lambda_ke^\tau}-1-\lambda_ke^\tau  \leq \Big(\frac{1}{2}+ \epsilon\Big)e^{2\tau}\big(1- e^{-\lambda_k}\big)^2
\leq\Big(\frac{1}{2}+ \epsilon\Big)e^{2\theta}\big(1- e^{-\lambda_k}\big)^2=C_\theta(1-q_k^{(0)})^2. $$

\subsection{A case of Example \ref{ex:Dickman-more-general} with Neyman Type A distributed random variables} \label{poissonpoisson}
In this section we present another specific case of Example \ref{ex:Dickman-more-general} in which $f_n$ depends on $n$
(i.e. $\{f_n:n\geq 1\}$ is not a constant sequence of functions, as happens for the functions defined in
\eqref{eq:Dickman-more-general-conditions}).

\begin{example}\label{Poisson-Poisson}
	We consider the situation of Section \ref{gen-framework}, and we assume that $\{R_n:n\geq 1\}$ is a
	sequence of (independent) \emph{Neyman Type A} distributed random variables, i.e. compound Poisson
	distributed random variables with Poisson distributed (i.i.d.) summands. More precisely we consider
	$$R_n=\sum_{k=1}^{N_n} X^{(n)}_k,$$
    where, for each fixed $n\geq 1$, the random variables $\{X^{(n)}_k:k\geq 1\}$ are i.i.d. and Poisson 
    distributed with mean $\beta_n$, and independent of another Poisson distributed $N_n$ with mean 
    $\lambda_n=\lambda$  (i.e. $\lambda_n$ does not depend
    on $n$). Then we have
    $$\mathbb{E}[e^{\theta R_n}]=\exp(\lambda(e^{\beta_n(e^\theta-1)}-1));$$
    moreover 
    $$q_n^{(0)}=e^{-\lambda (1-e^{-\beta_n})}\qquad\mbox{and}\qquad 
    q_n^{(1)}= e^{-\lambda (1-e^{-\beta_n})}\lambda \beta_n e^{-\beta_n}.$$
    Finally we also assume that $\beta_n\to 0$ as $n\to\infty$.
\end{example}

\noindent
In what follows we show that this is a particular case of Example \ref{ex:Dickman-more-general}
(see Proposition \ref{prop:Poisson-Poisson} below). We 
start noting that (even without requiring that $\beta_n\to 0$) 
\begin{align*}&
\mathbb{E}[e^{\theta R_n}]=\exp(\lambda(e^{\beta_n(e^\theta-1)}-1))= e^{-\lambda (1-e^{-\beta_n})}e^{\lambda e^{-\beta_n}(e^{\beta_n e^\theta}  -1)};
\end{align*}
so   for $0<\tau\leq\theta$ we have
\begin{align*}
	&\mathbb{E}[e^{\tau R_n}]-(q_n^{(0)}+e^\tau q_n^{(1)})
	=e^{-\lambda (1-e^{-\beta_n})}\Big\{e^{\lambda e^{-\beta_n}(e^{\beta_n e^\tau} -1)}-1-\lambda \beta_n e^{-\beta_n}e^\tau\Big\}\\
	&\leq e^{\lambda e^{-\beta_n}(e^{\beta_n e^\tau}  -1)}-1-\lambda \beta_n e^{-\beta_n} e^\tau.
\end{align*}
Hence \eqref{eq:UB-quadratic} holds thanks to the following lemma.

\begin{lemma} Assume that $\beta_n \to 0$. Then
  \begin{equation}\label{limit}  
    \lim_{n \to \infty  }\frac{e^{\lambda e^{-\beta_n}(e^{\beta_n e^\tau}  -1)}-1-\lambda e^{-\beta_n} \beta_n e^\tau}
    {(1-e^{-\lambda (1-e^{-\beta_n})} )^2}=\frac{\lambda+1}{2\lambda}e^{2\tau}.
  \end{equation}
  uniformly in $\tau\in(0,\theta]$.
\end{lemma}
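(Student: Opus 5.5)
We expand numerator and denominator to second order in the small parameter $\beta_n$, with error terms controlled uniformly in $\tau\in(0,\theta]$. Throughout, write $\beta=\beta_n\to 0$ and $a=e^\tau\in(1,e^\theta]$; the point is that $a$ ranges over a compact set, so every Taylor remainder can be bounded by a constant depending only on $\theta$ times a power of $\beta$.

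\emph{Denominator.} Since $1-e^{-\beta}=\beta-\beta^2/2+O(\beta^3)$, we get $1-e^{-\lambda(1-e^{-\beta})}=\lambda\beta+O(\beta^2)$. Hence
$$(1-e^{-\lambda(1-e^{-\beta})})^2=\lambda^2\beta^2(1+O(\beta)).$$
This does not involve $\tau$ at all.

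\emph{Numerator.} Set
$$u:=\lambda e^{-\beta}(e^{\beta a}-1).$$
For $a\le e^\theta$ we have $e^{\beta a}-1=\beta a+\beta^2a^2/2+R_1$ with $|R_1|\le C_\theta\beta^3$. Multiplying by $e^{-\beta}=1-\beta+O(\beta^2)$ gives
$$u=\lambda\beta a+\lambda\beta^2\Big(\frac{a^2}{2}-a\Big)+O_\theta(\beta^3).$$
Moreover $0<u\le C_\theta\beta$, so $e^u-1-u=\frac{u^2}{2}+O_\theta(\beta^3)$, with $u^2=\lambda^2\beta^2a^2+O_\theta(\beta^3)$. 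The subtracted term is $\lambda e^{-\beta}\beta a=\lambda\beta a-\lambda\beta^2a+O_\theta(\beta^3)$. Writing the numerator as $(e^u-1-u)+\big(u-\lambda e^{-\beta}\beta a\big)$, the first bracket contributes $\frac{\lambda^2\beta^2a^2}{2}$. The second bracket equals $\lambda e^{-\beta}(e^{\beta a}-1-\beta a)=\frac{\lambda\beta^2a^2}{2}+O_\theta(\beta^3)$. Therefore the numerator is
$$\frac{\lambda(\lambda+1)}{2}\beta^2a^2+O_\theta(\beta^3).$$

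\emph{Conclusion.} Dividing by the denominator, the quotient equals
$$\frac{\lambda+1}{2\lambda}a^2+O_\theta(\beta),$$
since the correction from $(1+O(\beta))^{-1}$ is also $O_\theta(\beta)$ because $a^2\le e^{2\theta}$. This gives the limit $\frac{\lambda+1}{2\lambda}e^{2\tau}$ uniformly in $\tau\in(0,\theta]$.

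The main obstacle is the bookkeeping of uniform remainders. One clean way to handle it is the same device as in Lemma \ref{lem:uniform-limit}: bound each remainder via the Lagrange form, for example $|e^{x}-1-x-x^2/2|\le \frac{|x|^3}{6}e^{|x|}$, evaluated at $x=\beta a\le \beta e^\theta$. Then every $O(\beta^3)$ term is explicitly at most $C_\theta\beta^3$, and no dependence on $\tau$ survives beyond $a\le e^\theta$.
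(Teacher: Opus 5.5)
Your proof is correct, and it rests on the same two facts as the paper's proof. First, the denominator is $\lambda^2\beta_n^2(1+o(1))$ independently of $\tau$. Second, the numerator is a second-order quantity whose dependence on $\tau$ enters only through $e^\tau\le e^\theta$. The difference is in how the bookkeeping is organized.

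The paper substitutes $x_n=\beta_n e^\tau$ and drops the factor $e^{-\beta_n}$ by an unspecified ``standard argument''. It then proves the one-variable limit $\lim_{x\to 0}(e^{\lambda(e^x-1)}-1-\lambda x)/x^2=\lambda(\lambda+1)/2$ by splitting the numerator as $(e^{\lambda x}-1-\lambda x)+e^{\lambda x}(e^{\lambda(e^x-1-x)}-1)$. Uniformity follows from the $\varepsilon$--$\delta$ form of that limit together with $x_n\le \beta_n e^\theta$.

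Your split $(e^u-1-u)+\lambda e^{-\beta}(e^{\beta a}-1-\beta a)$ isolates the same two contributions, $\lambda^2/2$ and $\lambda/2$. Because you keep $e^{-\beta}$ throughout and bound every remainder by $C_\theta\beta^3$ via Lagrange's form, you never need the elimination step the paper leaves implicit. That step is less innocent than it looks. Replacing $\lambda e^{-\beta_n}$ by $\lambda$ in the exponent alone changes the numerator by about $\lambda\beta_n x_n$, which is of the same order as $x_n^2$. It cancels only because the same coefficient also multiplies the linear term.

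Your route also gives an explicit rate $O_\theta(\beta_n)$. The paper's version is shorter on the page but leaves these uniformity details to the reader.
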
   

\begin{proof}  
 Put $x_n=\beta_n e^\tau$ and observe that, as $n \to \infty$,
 $$1-e^{-\lambda (1-e^{-\beta_n})}\sim \lambda (1-e^{-\beta_n})\sim \lambda \beta_n 
 = \lambda e^{-\tau}(\beta_n e^\tau)    =  \lambda e^{-\tau} x_n.$$
 So it suffices to prove the simplified relation
 $$\lim_{ n \to \infty}\frac{e^{\lambda e^{-\beta_n}(e^{x_n}  -1)}-1-\lambda e^{-\beta_n} x_n}{ x_n^2}
 =\frac{\lambda(\lambda+1)}{2}.$$
 Since $e^{-\beta_n}\to 1$, with a standard argument we  eliminate it, and we shall prove that
 \begin{equation}\label{1}
 \lim_{x \to 0 }\frac{e^{\lambda  (e^{x}  -1)}-1-\lambda   x}{ x^2}= \frac{\lambda(\lambda+1)}{2},
 \end{equation}
 or equivalently that
 $$\lim_{x \to 0 }\frac{e^{\lambda  (e^{x}  -1)}-1-\lambda   x -\frac{\lambda^2  }{2} x^2    }{ x^2}=\frac{\lambda}{2}.$$ 
 Since
 $$1+\lambda   x +\frac{\lambda^2  }{2} x^2 = e^{\lambda x}   +o(x^2)$$
 we prove equivalently that
 $$\lim_{x \to 0 }\frac{e^{\lambda  (e^{x}  -1)}-  e^{\lambda x}  }{ x^2}=\frac{\lambda    }{2}, $$
 and this holds  since 
 $$\lim_{x \to 0 }e^{\lambda x}\frac{e^{\lambda  (e^{x}  -1-x)}-  1  }{ x^2}= \lim_{x \to 0 }\frac{e^{\lambda  (e^{x}  -1-x)}-  1  }{ x^2}=\lambda\lim_{x \to 0 }\frac{     e^{x}  -1-x   }{ x^2} =\frac{\lambda    }{2}.$$ 
 Now we have to prove that the relation  \eqref{limit} is uniform in $0<\tau < \theta$. 
 First, the denominator in \eqref{limit} does not depend on $\tau$, so we can prove equivalently that
 $$ \lim_{n \to \infty  }\frac{e^{\lambda e^{-\beta_n}(e^{\beta_n e^\tau}  -1)}-1-\lambda e^{-\beta_n} \beta_n e^\tau}{ \lambda^2 \beta_n^2}=\frac{\lambda+1}{2\lambda}e^{2\tau},$$
 uniformly in $\tau\in(0,\theta]$, or 
 $$ \lim_{n \to \infty  }\frac{e^{\lambda e^{-\beta_n}(e^{\beta_n e^\tau}  -1)}-1-\lambda e^{-\beta_n} \beta_n e^\tau}{  \beta_n^2e^{2\tau}}=\frac{\lambda(\lambda+1)}{2 }.$$
 We eliminate $e^{-\beta_n} $ (arguing as we have done above); moreover we observe that, by \eqref{1}, for every 
 $\epsilon$, there exists $\delta$ such that, if $0<x_n < \delta$, then
 $$\Big|\frac{e^{\lambda  (e^{x_n }  -1)}-1-\lambda  x_n }{ x_n ^2}-\frac{\lambda(\lambda+1)}{2}\Big|< \epsilon.$$
 Then, since $ x_n =\beta_n e^\tau$, we need to take $\beta_n\in(0,\delta e^{-\tau}$); moreover, observing that 
 $e^{-\theta}< e^{-\tau}$, we can take $\beta_n\in(0,\delta  e^{-\theta})$ and we get the desired estimate for every 
 $\tau\in(0,\theta].$   
\end{proof}

\noindent
Now we are ready to show that Example \ref{Poisson-Poisson} is a specific case of Example \ref{ex:Dickman-more-general}.
We shall do that with a suitable choice of $\lambda$ and $\{\beta_n:n\geq 1\}$ in terms of a sequence $\{\pi_n:n\geq 1\}$ (which 
has the properties stated in Example \ref{ex:Dickman-more-general}, and in particular can be expressed in terms of the 
function $\phi$ in the Setting in Section \ref{gen-framework}; see \eqref{eq:def-pi(k)}). The functions $\{f_n:n\geq 1\}$ in Example 
\ref{ex:Dickman-more-general} will depend on the choice of $\lambda$ and $\{\beta_n:n\geq 1\}$.

\begin{proposition}\label{prop:Poisson-Poisson}
  In Example \ref{Poisson-Poisson} take $\lambda$ and $\beta_n$ such that
  $$1- \lambda \beta_n e^{-\beta_n}=\pi_n$$
  (where $\pi_n=\left(\frac{\phi(n-1)}{\phi(n)}\right)^\nu$ as in \eqref{eq:def-pi(k)}), and
  $$f_n(x) = \exp\Big({(x-1)\frac{e^{\beta_n}-1}{\beta_n}}\Big).$$
  Then we have
  $$q_n^{(0)}=f_n(\pi_n); \qquad q_n^{(1)} = f_n(\pi_n)(1-\pi_n).$$
\end{proposition}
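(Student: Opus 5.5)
The plan is to verify the two identities by direct substitution, using the defining relation $1-\pi_n=\lambda\beta_n e^{-\beta_n}$ to eliminate $\pi_n$. The explicit formulas for $q_n^{(0)}$ and $q_n^{(1)}$ given in Example \ref{Poisson-Poisson} are then compared with $f_n(\pi_n)$ and $f_n(\pi_n)(1-\pi_n)$.

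\textbf{Well-definedness of $\beta_n$.} The map $\beta\mapsto\lambda\beta e^{-\beta}$ is continuous and strictly increasing on $[0,1]$, with range $[0,\lambda/e]$. Since $\pi_n\to 1$ by \eqref{eq:conditions-on-phi(n)}, we have $1-\pi_n\in[0,\lambda/e]$ for all $n$ large enough. For such $n$ there is a unique $\beta_n\in[0,1]$ solving $\lambda\beta_n e^{-\beta_n}=1-\pi_n$. Moreover $\beta_n\to 0$, consistent with the standing assumption in Example \ref{Poisson-Poisson}. If $\phi$ is strictly increasing, then $\pi_n<1$ and hence $\beta_n>0$, so $(e^{\beta_n}-1)/\beta_n$ is defined. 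For the finitely many initial indices one may adjust $\lambda$, or simply discard them, since they play no role asymptotically.

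\textbf{First identity.} By definition,
$$f_n(\pi_n)=\exp\Big(-(1-\pi_n)\frac{e^{\beta_n}-1}{\beta_n}\Big).$$
Substituting $1-\pi_n=\lambda\beta_n e^{-\beta_n}$ gives the exponent
$$-\lambda\beta_n e^{-\beta_n}\cdot\frac{e^{\beta_n}-1}{\beta_n}=-\lambda(1-e^{-\beta_n}).$$
Hence $f_n(\pi_n)=e^{-\lambda(1-e^{-\beta_n})}=q_n^{(0)}$.

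\textbf{Second identity.} From Example \ref{Poisson-Poisson}, $q_n^{(1)}=q_n^{(0)}\lambda\beta_n e^{-\beta_n}$. This equals $f_n(\pi_n)(1-\pi_n)$ by the first identity and the defining relation for $\beta_n$.

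\textbf{Link to condition (a).} Although not strictly required by the statement, one can also check that $f_n$ fits condition (a) of Example \ref{ex:Dickman-more-general}. Each $f_n$ maps $[0,1]$ into $(0,1]$ with $f_n(1)=1$. The difference quotient satisfies $\frac{f_n(x)-1}{x-1}\to\frac{e^{\beta_n}-1}{\beta_n}$ as $x\uparrow 1$, and this tends to $1$ because $\beta_n\to 0$. Making the limits in \eqref{eq:condition-function-f} hold uniformly in $n$ uses that the $\beta_n$ are bounded (indeed $\beta_n\in[0,1]$).

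\textbf{Main obstacle.} The only delicate point is the well-definedness of $\beta_n$ discussed above; the identities themselves are one-line computations.
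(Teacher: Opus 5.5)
Your computation is correct and is exactly the paper's proof: substituting $1-\pi_n=\lambda\beta_n e^{-\beta_n}$ turns $f_n(\pi_n)$ into $e^{-\lambda(1-e^{-\beta_n})}=q_n^{(0)}$, and then $q_n^{(1)}=q_n^{(0)}\lambda\beta_n e^{-\beta_n}$ gives the second identity; your well-definedness discussion (taking $\lambda\ge e$ because $1-\pi_1=1$, and choosing the root $\beta_n\in(0,1]$, which is what actually forces $\beta_n\to0$) fills in a point the paper takes for granted. One caution on your optional last paragraph: for each fixed $n$ one has $\lim_{x\uparrow1}\frac{f_n(x)-1}{x-1}=\frac{e^{\beta_n}-1}{\beta_n}>1$, so \eqref{eq:condition-function-f} does not hold literally, only in the asymptotic form \eqref{limiti}, which is all that the proof of Theorem \ref{main} actually uses.
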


\begin{proof}
	By the formulas in Example \ref{Poisson-Poisson} we get immediately
	$$q_n^{(1)}=q_n^{(0)}\lambda \beta_n e^{-\beta_n}= q_n^{(0)}(1-\pi_n);   $$
	 (note that $\beta_n\to 0$  because $\pi_n\to 1$ as $n\to\infty$). Finally we note that
	
	$$f_n(\pi_n)=\exp\Big({(\pi_n-1)\frac{e^{\beta_n}-1}{\beta_n}}\Big)
	=\exp\Big({-\lambda \beta_n e^{-\beta_n}\frac{e^{\beta_n}-1}{\beta_n}}\Big)
	=e^{-\lambda (1-e^{-\beta_n})}=q_n^{(0)}.$$
\end{proof}

\section{Some technical lemmas and proof of Theorem \ref{main}}\label{sec:final}

\subsection{Some technical lemmas} \label{lemmas}
First,  we recall   the well known  Abel's summation formula (see for instance \cite{Tenenbaum}):
\begin{equation}\label{eq:Abel-formula}
	\sum_{j=m}^na_jb_j=\left(\sum_{j=0}^na_j\right)b_n-\left(\sum_{j=0}^ma_j\right)b_m
	+\sum_{j=m}^{n-1}\left(\sum_{k=0}^ja_k\right)(b_j-b_{j+1})\quad \mbox{for}\ 0\leq m\leq n.
\end{equation}
The following lemma is proved in \cite{GiulianoMacciESAIM-PS}.
Set
$$s_{j,n}:=\sum_{k=j}^n\frac{\phi(k)-\phi(k-1)}{\phi^2(k)}.$$

\begin{lemma}[Lemma 3.1 in \cite{GiulianoMacciESAIM-PS}]\label{lem:dickman-LDP}
	Let $\phi:\mathbb{N}\to [0,\infty)$ be the strictly increasing
	function in \eqref{eq:conditions-on-phi(n)} and let $L(n)$ be
	defined by \eqref{eq:def-L(n)}.\\
	(i) Assume that \eqref{eq:concave-phi} holds. Then
	$s_{1,\infty}<\infty$; for $n>j\geq 1$, we have
	\begin{equation}\nonumber\label{eq:dickman-monotonicity}
		\phi(j)s_{j,n}-\phi(j+1)s_{j+1,n}\geq 0;
	\end{equation}
	for $n\geq j\geq 2$, we have
	\begin{equation}\label{eq:dickman-inequalities-chain}
		1-\frac{\phi(j)}{\phi(n+1)}\leq
		\phi(j)s_{j,n}\leq\frac{\phi(j)}{\phi(j-1)}\leq c,\ \mbox{for
			some}\ c\geq\frac{\phi(n)}{\phi(n-1)}\ \mbox{for all}\ n\geq 2.
	\end{equation}
	(ii) Assume that \eqref{eq:condition-for-limit-law} and
	$\lim_{n\to\infty}L(n)=\infty$ hold. Then
	\begin{equation}\nonumber\label{eq:dickman-limit-lambda-means}
		\lim_{n\to\infty}\frac{1}{L(n)}\sum_{k=1}^n\lambda_k=\nu.
	\end{equation}

\end{lemma}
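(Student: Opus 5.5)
For part (i) the plan is to compare each summand of $s_{j,n}$ with a telescoping difference of $1/\phi$. For the upper bound, for $k\geq 2$ I would use $\phi(k)\geq\phi(k-1)>0$ to get
$$\frac{\phi(k)-\phi(k-1)}{\phi^2(k)}\leq\frac{\phi(k)-\phi(k-1)}{\phi(k)\phi(k-1)}=\frac{1}{\phi(k-1)}-\frac{1}{\phi(k)}.$$
Summing from $k=j\geq 2$ to $n$ gives $s_{j,n}\leq \frac{1}{\phi(j-1)}-\frac{1}{\phi(n)}\leq\frac{1}{\phi(j-1)}$, and hence $\phi(j)s_{j,n}\leq \phi(j)/\phi(j-1)$. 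Together with the finite term $k=1$, this also shows $s_{1,\infty}<\infty$. The constant $c$ exists because $\phi(n)/\phi(n-1)\to 1$ by \eqref{eq:conditions-on-phi(n)}, so $\sup_{n\geq 2}\phi(n)/\phi(n-1)<\infty$.

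For the lower bound I would use concavity \eqref{eq:concave-phi} together with $\phi(k+1)\geq\phi(k)$:
$$\frac{1}{\phi(k)}-\frac{1}{\phi(k+1)}=\frac{\phi(k+1)-\phi(k)}{\phi(k)\phi(k+1)}\leq\frac{\phi(k)-\phi(k-1)}{\phi^2(k)}.$$
Summing from $k=j$ to $n$ gives $s_{j,n}\geq \frac{1}{\phi(j)}-\frac{1}{\phi(n+1)}$, which is the left inequality in \eqref{eq:dickman-inequalities-chain}.

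For the monotonicity statement I would write, for $n>j\geq 1$,
$$\phi(j)s_{j,n}-\phi(j+1)s_{j+1,n}=\frac{\phi(j)-\phi(j-1)}{\phi(j)}-(\phi(j+1)-\phi(j))\,s_{j+1,n},$$
since $\phi(j)s_{j,n}=\frac{\phi(j)-\phi(j-1)}{\phi(j)}+\phi(j)s_{j+1,n}$. I would then insert the upper bound $s_{j+1,n}\leq 1/\phi(j)$ from the telescoping step, which is valid here since $j+1\geq 2$. Concavity gives $\phi(j+1)-\phi(j)\leq\phi(j)-\phi(j-1)$, so the whole expression is nonnegative. This is the only place where care is needed: the right upper bound has to be applied to the tail $s_{j+1,n}$, not to $s_{j,n}$.

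For part (ii) the plan is Abel summation \eqref{eq:Abel-formula} with $a_k=\phi(k)\lambda_k$ (and $a_0=0$) and $b_k=1/\phi(k)$. Setting $A_k:=\sum_{i=1}^k\phi(i)\lambda_i$, this gives
$$\sum_{k=1}^n\lambda_k=\frac{A_n}{\phi(n)}+\sum_{k=1}^{n-1}\frac{A_k}{\phi(k)}\cdot\frac{\phi(k+1)-\phi(k)}{\phi(k+1)},$$
up to a bounded boundary term at $k=1$. By \eqref{eq:condition-for-limit-law}, $A_k/\phi(k)\to\nu$. The weights $\frac{\phi(k+1)-\phi(k)}{\phi(k+1)}$ are exactly the increments of $L$, so their partial sums equal $L(n)-L(1)$, which diverges by assumption. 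A Ces\`aro (Stolz) argument then shows that the weighted sum divided by $L(n)$ tends to $\nu$. The boundary term $A_n/\phi(n)$ is bounded, so it vanishes after division by $L(n)\to\infty$. This proves the limit in (ii). I expect no real obstacle here beyond matching the indices so that the weights are recognized as increments of $L$.
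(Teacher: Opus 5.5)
Your proof is correct and complete. The paper gives no proof of this lemma; it quotes it from \cite{GiulianoMacciESAIM-PS}, so there is no in-text argument to compare against.

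The two telescoping comparisons give all of (i):
\[
\frac{\phi(k)-\phi(k-1)}{\phi^2(k)}\leq\frac{1}{\phi(k-1)}-\frac{1}{\phi(k)}\quad(k\geq 2),\qquad \frac{1}{\phi(k)}-\frac{1}{\phi(k+1)}\leq\frac{\phi(k)-\phi(k-1)}{\phi^2(k)}.
\]
The first needs only monotonicity; the second uses concavity \eqref{eq:concave-phi}. Your reduction of the monotonicity claim to $(\phi(j+1)-\phi(j))s_{j+1,n}\leq\frac{\phi(j)-\phi(j-1)}{\phi(j)}$ is right, including the case $j=1$.

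In (ii) the Abel summation is exact: there is no boundary term at $k=1$, since $A_0=0$. The weights $L(k+1)-L(k)$ are strictly positive with divergent sum, so Stolz--Ces\`aro finishes the argument. This is the same Abel-plus-Ces\`aro mechanism the paper later uses for \eqref{eq:LB-for-GE-e2-1}.

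Your argument also shows that concavity is needed only for the lower bound and the monotonicity statement. The finiteness of $s_{1,\infty}$, the upper bound and part (ii) do not use it.
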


\noindent
The following lemma  collects  other useful (but tedious) results.

\begin{lemma}\label{lem:estimates}
	Let us consider the Setting in Section \ref{gen-framework} (so, in particular, see \eqref {eq:def-L(n)} 
	for the definition of $L(n)$), and let $\nu$ be as in Example \ref{ex:Dickman-more-general}. Then
	\begin{align}&
		\lim_{n\to\infty}\frac{\log\phi(n)}{L(n)}=1;\label{eq:estimate1}\\ &\lim_{n\to\infty}\frac{1}{L(n)}\sum_{h=0}^n(1-\pi_h)=\nu\quad
	    \mbox{and}\quad\lim_{n\to\infty}\frac{1}{\log\phi(n)}\sum_{h=0}^n(1-\pi_h)=\nu;\label{eq:estimate2}\\ &\lim_{n\to\infty}\frac{1-f_n(\pi_n)}{L(n)-L(n-1)}=\nu;\label{eq:estimate3}\\ &\label{eq:estimate4}
    	\lim_{n\to\infty}\frac{\sum_{j=n_0+1}^n\log q_j^{(0)}}{L(n)}=-\nu\quad\mbox{for every integer}\ n_0;\\ &\label{eq:estimate5}
	    \lim_{n\to\infty}\frac{\sum_{j=n_0+1}^n(1-q_j^{(0)})^2}{L(n)}=0\quad\mbox{for every integer}\ n_0;\\ & 
	    \label{eq:LB-for-GE-e2-2-equivalent}
         \lim_{n\to\infty}\frac{1}{L(n)}\sum_{j=n_0+1}^n(1-\pi_j)^2=0;\\ &\label{eq:estimate-separate}   
	    \mbox{there exists}\ \beta>0\ \mbox{such that}\quad 0\leq e^{-x}+x-1\leq   x^2\quad\mbox{for every}\ x\ \in  (-  \beta, \beta).
	\end{align}
\end{lemma}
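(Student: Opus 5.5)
The plan is to reduce every item to a single tool, the Stolz--Ces\`aro theorem, applied with denominator $L(n)$. This sequence is strictly increasing because $\phi$ is, and it diverges by \eqref{eq:L(n)-divergent}. Throughout I write
$$x_k:=L(k)-L(k-1)=\frac{\phi(k)-\phi(k-1)}{\phi(k)}=1-\frac{\phi(k-1)}{\phi(k)},$$
so that $x_k\in(0,1]$ and, by \eqref{eq:conditions-on-phi(n)}, $x_k\to 0$. Each item then follows once the $n$-th increment of its numerator is shown to be asymptotic to a constant multiple of $x_n$, or to be $o(x_n)$. Since $L(n)\to\infty$, finitely many initial terms are always negligible. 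This handles the terms with $\phi(0)=0$ (for instance $\log\phi(0)$ and $\pi_0$), as well as the arbitrary $n_0$ in \eqref{eq:estimate4} and \eqref{eq:estimate5}.

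For \eqref{eq:estimate1}, I write $\log\phi(n)=\log\phi(1)+\sum_{k=2}^n(\log\phi(k)-\log\phi(k-1))$. The increments are $-\log(1-x_k)\sim x_k$, so Stolz--Ces\`aro gives the limit $1$. For \eqref{eq:estimate2}, I use $1-\pi_h=1-(1-x_h)^\nu\sim\nu x_h$, which gives the first limit. The second limit follows by combining the first with \eqref{eq:estimate1}. For \eqref{eq:estimate3}, I use \eqref{limiti} to write $1-f_n(\pi_n)\sim 1-\pi_n\sim\nu x_n$, and $x_n=L(n)-L(n-1)$ is exactly the denominator.

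For \eqref{eq:estimate4}, recall that $q_j^{(0)}=f_j(\pi_j)\to 1$ by \eqref{limiti}. Hence $\log q_j^{(0)}\sim-(1-f_j(\pi_j))\sim-\nu x_j$ by \eqref{eq:estimate3}, and Stolz--Ces\`aro yields $-\nu$. For \eqref{eq:estimate5}, the same asymptotics give $(1-q_j^{(0)})^2\sim\nu^2x_j^2=o(x_j)$, because $x_j\to0$. The Stolz--Ces\`aro ratio therefore tends to $0$. Item \eqref{eq:LB-for-GE-e2-2-equivalent} is identical, using $(1-\pi_j)^2\sim\nu^2x_j^2$.

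Finally, \eqref{eq:estimate-separate} is elementary. The function $g(x)=e^{-x}+x-1$ is convex with $g(0)=g'(0)=0$, so $g\ge0$. Its Taylor expansion $g(x)=x^2/2+O(x^3)$ gives $g(x)\le x^2$ on some interval $(-\beta,\beta)$.

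The only step needing care is the bookkeeping for the Stolz--Ces\`aro applications. One must check that the hypotheses hold: the denominator is strictly monotone and divergent, and the ratio of increments converges. One must also check that the two-step asymptotic equivalences are legitimate, especially in \eqref{eq:estimate3} and \eqref{eq:estimate4}, which rely on the uniformity built into \eqref{limiti}. I expect no real obstacle beyond this.
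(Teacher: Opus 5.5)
Your proposal is correct and follows the paper's route. Every item is reduced to Stolz--Ces\`aro with denominator $L(n)$, whose increments are $1-\phi(n-1)/\phi(n)$, exactly as in the paper. As in the paper, you tacitly need $q_j^{(0)}>0$ for the finitely many discarded indices in \eqref{eq:estimate4}.

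There are two minor deviations. For \eqref{eq:LB-for-GE-e2-2-equivalent} you apply Stolz--Ces\`aro directly to $(1-\pi_j)^2$, which is slightly simpler; the paper instead bounds $(1-\pi_j)^2\le 4(1-q_j^{(0)})^2$ for large $j$ and invokes \eqref{eq:estimate5}. You also supply the convexity and Taylor argument for \eqref{eq:estimate-separate}, which the paper omits.
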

\begin{proof}
	We prove each statement separately.\\
	
	\bigskip
	\noindent
	Proof of \eqref{eq:estimate1}: by Cesaro Theorem, \eqref{eq:conditions-on-phi(n)} and \eqref{eq:def-L(n)} 
	(we omit some easy details) we have
	$$\lim_{n\to\infty}\frac{\log\phi(n)}{L(n)}
	=\lim_{n\to\infty}\frac{\log\phi(n)-\log\phi(n-1)}{L(n)-L(n-1)}
	=\lim_{n\to\infty}\frac{\log(\phi(n)/\phi(n-1))}{1-\frac{\phi(n-1)}{\phi(n)}}
	=1.$$
	
	\bigskip
	\noindent
	Proof of \eqref{eq:estimate2}: the first  limit holds  by Cesaro Theorem and \eqref{eq:def-pi(k)}, indeed we
	have
	$$\lim_{n\to\infty}\frac{1}{L(n)}\sum_{h=0}^n(1-\pi_h)
    =\lim_{n\to\infty}\frac{1-\pi_n}{L(n)-L(n-1)}
    =\lim_{n\to\infty}\frac{1-\left(\frac{\phi(n-1)}{\phi(n)}\right)^\nu}{1-\frac{\phi(n-1)}{\phi(n)}}=\nu;$$	
	the second limit in \eqref{eq:estimate2} follows by combining the first limit and \eqref{eq:estimate1}.\\
	
	\bigskip
	\noindent
    Proof of \eqref{eq:estimate3}: by the second limit in \eqref{limiti} (note that 
    $\pi_n\to 1$ as $n\to\infty$) and the above computations for the first limit in \eqref{eq:estimate2}
    we have
    $$\lim_{n\to\infty}\frac{1-f_n(\pi_n)}{L(n)-L(n-1)}
    =\lim_{n\to\infty}\frac{1-f_n(\pi_n)}{1-\pi_n}\cdot\frac{1-\pi_n}{L(n)-L(n-1)}=\nu.$$

    \bigskip
	\noindent
    Proofs of \eqref{eq:estimate4} and \eqref{eq:estimate5}:  by Cesaro Theorem, by taking into account
    $q_n^{(0)}=f_n(\pi_n)\to 1$ as $n\to\infty$ (see the first limit in \eqref{limiti}), and by 
    \eqref{eq:estimate3} (for the last equality), we have 
    \begin{align*}&
    	\lim_{n\to\infty}\frac{\sum_{j=n_0+1}^n\log q_j^{(0)}}{L(n)}
    	=\lim_{n\to\infty}\frac{\log q_n^{(0)}}{L(n)-L(n-1)} 
    	=\lim_{n\to\infty}\frac{q_n^{(0)}-1}{L(n)-L(n-1)}
    \\&	  =-\lim_{n\to\infty}\frac{1-f_n(\pi_n)}{L(n)-L(n-1)}=-\nu.
     \end{align*}
    Moreover, by reasoning in a similar way, we  have also 
    $$\lim_{n\to\infty}\frac{\sum_{j=n_0+1}^n(1-q_j^{(0)})^2}{L(n)}=\lim_{n\to\infty}\frac{(1-q_n^{(0)})^2}{L(n)-L(n-1)}
    =\lim_{n\to\infty}(1-f_n(\pi_n))\cdot\frac{1-f_n(\pi_n)}{L(n)-L(n-1)}=
    0\cdot\nu=0.$$ 
    
    \bigskip
	\noindent
    Proof of \eqref{eq:LB-for-GE-e2-2-equivalent}. We take $n_0$ such that, for $j>n_0$,
    $$\frac{1-f_j(\pi_j)}{1-\pi_j}\geq\frac{1}{2}$$
    (because $\pi_j\to 1$ as $j\to\infty$, and $\frac{1-f_j( x)}{1-x}\to 1$ as $x\to 1$, uniformly in $j$; 
    see \eqref{eq:condition-function-f} and \eqref{limiti}); then
    $$0\leq\frac{1}{4L(n)}\sum_{j=n_0+1}^n(1-\pi_j)^2\leq\frac{1}{L(n)}\sum_{j=n_0+1}^n(1-f_j(\pi_j))^2,$$
    and \eqref{eq:LB-for-GE-e2-2-equivalent} holds noting that, by \eqref{eq:estimate5} (we recall that 
    $f_j(\pi_j)=q_j^{(0)}$ by \eqref{eq:pmf-condition-0-and-1} in Example \ref{ex:Dickman-more-general}),
    $$\lim_{n\to\infty}\frac{1}{L(n)}\sum_{j=n_0+1}^n(1-f_j(\pi_j))^2=0.$$
    The proof of \eqref{eq:estimate-separate} is  standard and is omitted.
\end{proof}

\subsection{Proof of Theorem \ref{main}} \label{proof-of-the LDP}
	We prove the Proposition by applying the G\"artner--Ellis Theorem (as happens for the proof of 
	Proposition 3.2 in \cite{GiulianoMacciESAIM-PS}). So we have to prove that	
	$$\Lambda(\theta):=\lim_{n\to\infty}\frac{1}{L(n)}\log\mathbb{E}
	\left[e^{\theta\sum_{k=1}^n\frac{\phi(k)-\phi(k-1)}{\phi^2(k)}\sum_{j=1}^k\phi(j)R_j}\right]
	=\nu(e^\theta-1)$$ for all $\theta\in\mathbb{R}$. Indeed, since $\theta\mapsto\nu(e^\theta-1)$
	is a finite-valued and differentiable function (defined on $\mathbb{R}$), the LDP holds by the 
	G\"artner--Ellis Theorem with good rate function $\Lambda^*$ defined by
	$$\Lambda^*(x):=\sup_{\theta\in\mathbb{R}}\left\{\theta x-\nu(e^\theta-1)\right\},$$
	and $\Lambda^*$ coincides with the rate function $I$ in the statement of the 
	proposition (this can be easily checked with some easy computations). Moreover it 
	is useful to handle the above expression by following the same lines of the beginning of the
	proof of Proposition 3.2 in \cite{GiulianoMacciESAIM-PS}. By the independence of the random 
	variables $\{R_n:n\geq 1\}$ we have
	$$\log\mathbb{E}\left[e^{\theta\sum_{k=1}^n\frac{\phi(k)-\phi(k-1)}{\phi^2(k)}\sum_{j=1}^k\phi(j)R_j}\right]
	=\log\mathbb{E}\left[e^{\theta\sum_{j=1}^n\phi(j)s_{j,n}R_j}\right]
	=\sum_{j=1}^n\log\mathbb{E}\left[e^{\theta\phi(j)s_{j,n}R_j}\right];$$
    therefore we have to prove that
	$$\lim_{n\to\infty}\frac{1}{L(n)}\sum_{j=1}^n\log\mathbb{E}\left[e^{\theta\phi(j)s_{j,n}R_j}\right]=\nu(e^\theta-1)$$
	for all $\theta\in\mathbb{R}$.
	
	\noindent
	The case $\theta=0$ is immediate. Now we consider the case $\theta\not =0$. We start noting that, for every
	nonegative integer $n_0\geq 1$ (in what follows we shall take $n_0$ which depends on $\theta$ and an arbitrary
	small $\varepsilon>0$), we have
	$$\lim_{n\to\infty}\frac{1}{L(n)}\sum_{j=1}^{n_0}\log\mathbb{E}\left[e^{\theta\phi(j)s_{j,n}R_j}\right]=0;$$
	indeed 
	$$0\leq\frac{1}{L(n)}\sum_{j=1}^{n_0}\log\mathbb{E}\left[e^{\theta\phi(j)s_{j,n}R_j}\right]
	\leq\frac{1}{L(n)}\left(\log\mathbb{E}\left[e^{(0\vee\theta)\phi(1)s_{1,\infty}R_j}\right]
	+\sum_{j=2}^{n_0}\log\mathbb{E}\left[e^{(0\vee\theta) cR_j}\right]\right)$$

	\noindent
	where (see Lemma \ref{lem:dickman-LDP}) $s_{1,\infty}<\infty$ and $c$ is the constant in 
	\eqref{eq:dickman-inequalities-chain}. Thus it suffices to show that
	$$\lim_{n\to\infty}\frac{1}{L(n)}\sum_{j=n_0+1}^n\log\mathbb{E}\left[e^{\theta\phi(j)s_{j,n}R_j}\right]=\nu(e^\theta-1).$$
	Moreover, by \eqref{eq:dickman-inequalities-chain}, this limit relation holds if we have
	\begin{equation}\label{eq:LB-for-GE}
		\lim_{n\to\infty}\frac{1}{L(n)}\sum_{j=n_0+1}^n\log\mathbb{E}\left[e^{\theta(1-\frac{\phi(j)}{\phi(n+1)})R_j}\right]=\nu(e^\theta-1) 
	\end{equation}
    and
    \begin{equation}\label{eq:UB-for-GE}
    	\lim_{n\to\infty}\frac{1}{L(n)}\sum_{j=n_0+1}^n\log\mathbb{E}\left[e^{\theta\frac{\phi(j)}{\phi(j-1)}R_j}\right]=\nu(e^\theta-1).
    \end{equation}

\noindent
    We start with the proof of \eqref{eq:LB-for-GE}; the proof of \eqref{eq:UB-for-GE} is similar, 
    and we give some details below. We take into account $q_j^{(1)}=q_j^{(0)}(1-\pi_j)$ (see 
    \eqref{eq:pmf-condition-0-and-1} in Example \ref{ex:Dickman-more-general}), and we have
    \begin{multline*}
    	\log\mathbb{E}\left[e^{\theta(1-\frac{\phi(j)}{\phi(n+1)})R_j}\right]\\
    	=\log\mathbb{E}\left[e^{\theta(1-\frac{\phi(j)}{\phi(n+1)})R_j}\right]
    	-\log\left(q_j^{(0)}+e^{\theta(1-\frac{\phi(j)}{\phi(n+1)})}q_j^{(1)}\right)
    	+\log\left(q_j^{(0)}+e^{\theta(1-\frac{\phi(j)}{\phi(n+1)})}q_j^{(1)}\right)\\
    	=\log\mathbb{E}\left[e^{\theta(1-\frac{\phi(j)}{\phi(n+1)})R_j}\right]
    	-\log\left(q_j^{(0)}+e^{\theta(1-\frac{\phi(j)}{\phi(n+1)})}q_j^{(1)}\right)
    	+\log q_j^{(0)}+\log\left(1+e^{\theta(1-\frac{\phi(j)}{\phi(n+1)})}(1-\pi_j)\right);
    \end{multline*}
    then, since
    $$\lim_{n\to\infty}\frac{1}{L(n)}\sum_{j=n_0+1}^n\log q_j^{(0)} =-\nu$$
    by \eqref{eq:estimate4}, we get \eqref{eq:LB-for-GE} if we prove
    \begin{equation}\label{eq:LB-for-GE-e1}
    	\lim_{n\to\infty}\frac{1}{L(n)}\sum_{j=n_0+1}^n\left\{\log\mathbb{E}\left[e^{\theta(1-\frac{\phi(j)}{\phi(n+1)})R_j}\right]
    	-\log\left(q_j^{(0)}+e^{\theta(1-\frac{\phi(j)}{\phi(n+1)})}q_j^{(1)}\right)\right\}=0,
    \end{equation}
    and
    \begin{equation}\label{eq:LB-for-GE-e2}
    	\lim_{n\to\infty}\frac{1}{L(n)}\sum_{j=n_0+1}^n\log\left(1+e^{\theta(1-\frac{\phi(j)}{\phi(n+1)})}(1-\pi_j)\right)=\nu e^\theta.
    \end{equation}
    Now we prove \eqref{eq:LB-for-GE-e1}. By   Lagrange Theorem (note that
     $$q_j^{(0)}+e^{\theta(1-\frac{\phi(j)}{\phi(n+1)})}q_j^{(1)}<\mathbb{E}\left[e^{\theta(1-\frac{\phi(j)}{\phi(n+1)})R_j}\right]$$  
    because the summands are positive) there exists 
    $\xi_{n,j}\in\left(q_j^{(0)}+e^{\theta(1-\frac{\phi(j)}{\phi(n+1)})}q_j^{(1)},
    \mathbb{E}\left[e^{\theta(1-\frac{\phi(j)}{\phi(n+1)})R_j}\right]\right)$ such that
    \begin{multline*}
    	0\leq\log\mathbb{E}\left[e^{\theta(1-\frac{\phi(j)}{\phi(n+1)})R_j}\right]
    	-\log\left(q_j^{(0)}+e^{\theta(1-\frac{\phi(j)}{\phi(n+1)})}q_j^{(1)}\right)\\
    	=\frac{1}{\xi_{n,j}}\left\{\mathbb{E}\left[e^{\theta(1-\frac{\phi(j)}{\phi(n+1)})R_j}\right]
    	-\left(q_j^{(0)}+e^{\theta(1-\frac{\phi(j)}{\phi(n+1)})}q_j^{(1)}\right)\right\};
    \end{multline*}
    moreover, by \eqref{eq:UB-quadratic} in Example \ref{ex:Dickman-more-general} with 
    $\tau = \theta(1-\frac{\phi(j)}{\phi(n+1)})$ 
    we have  
    $$0\leq\mathbb{E}\left[e^{\theta(1-\frac{\phi(j)}{\phi(n+1)})R_j}\right]
    -\left(q_j^{(0)}+e^{\theta(1-\frac{\phi(j)}{\phi(n+1)})}q_j^{(1)}\right)\\
    \leq\ {C}_\theta(1-q_j^{(0)})^2,$$
    for every $j > k_\theta$.
    Now we take $n_0$ such that, for $j>n_0$, $f_j(\pi_j)\geq\frac{1}{2}$ (see \eqref{limiti}); then (again for $j\geq n_0$)
    $$\xi_{n,j}\geq q_j^{(0)}+e^{\theta(1-\frac{\phi(j)}{\phi(n+1)})}q_j^{(1)}\geq q_j^{(0)}=f(\pi_j)\geq\frac{1}{2},$$
    and therefore
    $$0\leq\log\mathbb{E}\left[e^{\theta(1-\frac{\phi(j)}{\phi(n+1)})R_j}\right]
    -\log\left(q_j^{(0)}+e^{\theta(1-\frac{\phi(j)}{\phi(n+1)})}q_j^{(1)}\right)\leq 2 C_\theta(1-q_j^{(0)})^2.$$
    In conclusion we have
    $$0\leq\frac{1}{L(n)}\sum_{j=n_0+1}^n\left\{\log\mathbb{E}\left[e^{\theta(1-\frac{\phi(j)}{\phi(n+1)})R_j}\right]
    -\log\left(q_j^{(0)}+e^{\theta(1-\frac{\phi(j)}{\phi(n+1)})}q_j^{(1)}\right)\right\}
    \leq\frac{2 {C}_\theta}{L(n)}\sum_{j=n_0+1}^n(1-q_j^{(0)})^2,$$
    and we get \eqref{eq:LB-for-GE-e1} by \eqref{eq:estimate5}.
    
    \bigskip
    \noindent
    Now we prove \eqref{eq:LB-for-GE-e2}. It is well-known that there exists $m>0$ such that
    \begin{equation}\label{eq:linearization-logarithm}
    	|\log(1+x)-x|\leq mx^2\quad\mbox{for}\ |x|<\frac{1}{2}.
    \end{equation}
    Moreover
    $$0\leq\max_{2\leq j\leq n}e^{\theta(1-\frac{\phi(j)}{\phi(n+1)})}(1-\pi_n)
    \leq\max_{2\leq j\leq n}(1\vee e^\theta)(1-\pi_n)\to 0\quad\mbox{as}\ n\to\infty$$
    (here we take into account that $1-\frac{\phi(j)}{\phi(n+1)}\in[0,1]$). Thus we take $n_0$ such that
    $(1\vee e^\theta)(1-\pi_n)<\frac{1}{2}$ for $n>n_0$ (because $\pi_n\to 1$ as $n\to\infty$ by \eqref{limiti}), and we have
    $$e^{\theta(1-\frac{\phi(j)}{\phi(n+1)})}(1-\pi_n)<\frac{1}{2}\quad\mbox{for}\ n\geq j>n_0.$$
    Then, by using \eqref{eq:linearization-logarithm} with $x=e^{\theta(1-\frac{\phi(j)}{\phi(n+1)})}(1-\pi_n)$,
    we get
    \begin{align*}&
    	\left|\frac{1}{L(n)}\sum_{j=n_0+1}^n\log\left(1+e^{\theta(1-\frac{\phi(j)}{\phi(n+1)})}(1-\pi_j)\right)
    	-\frac{1}{L(n)}\sum_{j=n_0+1}^ne^{\theta(1-\frac{\phi(j)}{\phi(n+1)})}(1-\pi_j)\right|\\&
    	\leq\frac{m}{L(n)}\sum_{j=n_0+1}^ne^{2\theta(1-\frac{\phi(j)}{\phi(n+1)})}(1-\pi_j)^2.
    \end{align*}
    Hence we obtain \eqref{eq:LB-for-GE-e2} if we prove that
    \begin{equation}\label{eq:LB-for-GE-e2-1}
    	\lim_{n\to\infty}\frac{1}{L(n)}\sum_{j=n_0+1}^ne^{\theta(1-\frac{\phi(j)}{\phi(n+1)})}(1-\pi_j)=\nu e^\theta
    \end{equation}
    and
    \begin{equation}\label{eq:LB-for-GE-e2-2}
    	\lim_{n\to\infty}\frac{1}{L(n)}\sum_{j=n_0+1}^ne^{2\theta(1-\frac{\phi(j)}{\phi(n+1)})}(1-\pi_j)^2=0.
    \end{equation}
    The second limit \eqref{eq:LB-for-GE-e2-2} can be easily proved noting that
    $$0\leq\frac{1}{L(n)}\sum_{j=n_0+1}^ne^{2\theta(1-\frac{\phi(j)}{\phi(n+1)})}(1-\pi_j)^2
    \leq\frac{(1\vee e^{2\theta})}{L(n)}\sum_{j=n_0+1}^n(1-\pi_j)^2;$$
    thus \eqref{eq:LB-for-GE-e2-2} follows from \eqref{eq:LB-for-GE-e2-2-equivalent}.

    \medskip
    \noindent
    The first limit \eqref{eq:LB-for-GE-e2-1} will be proved in several steps. We 
    apply  the Abel formula \eqref{eq:Abel-formula} with $m=n_0+1$, $a_1=\ldots=a_{n_0}=0$
    and
    $$a_j=1-\pi_j\quad\mbox{and}\quad b_j=e^{\theta(1-\frac{\phi(j)}{\phi(n+1)})}\quad\mbox{for}\ j=n_0+2,\ldots,n;$$
    then
    \begin{multline*}
    	\sum_{j=n_0+1}^n(1-\pi_j)e^{\theta(1-\frac{\phi(j)}{\phi(n+1)})}
    	=\left(\sum_{j=0}^n(1-\pi_j)\right)e^{\theta(1-\frac{\phi(n)}{\phi(n+1)})}
    	-\left(\sum_{j=0}^{n_0}(1-\pi_j)\right)e^{\theta(1-\frac{\phi(n_0)}{\phi(n+1)})}\\
    	+\sum_{j=n_0+1}^{n-1}\left(\sum_{k=0}^j(1-\pi_k)\right)\left(e^{\theta(1-\frac{\phi(j)}{\phi(n+1)})}
    	-e^{\theta(1-\frac{\phi(j+1)}{\phi(n+1)})}\right).
    \end{multline*}
    We remark that
    $$\lim_{n\to\infty}\frac{1}{L(n)}\left(\sum_{j=0}^n(1-\pi_j)\right)e^{\theta(1-\frac{\phi(n)}{\phi(n+1)})}=\nu$$
    by \eqref{eq:estimate2} and the second limit in \eqref{eq:conditions-on-phi(n)}, and
    $$\lim_{n\to\infty}\frac{1}{L(n)}\left(\sum_{j=0}^{n_0}(1-\pi_j)\right)e^{\theta(1-\frac{\phi(n_0)}{\phi(n+1)})}=0$$
    because
    $$0\leq\frac{1}{L(n)}\left(\sum_{j=0}^{n_0}(1-\pi_j)\right)e^{\theta(1-\frac{\phi(n_0)}{\phi(n+1)})}
    \leq\frac{(1 \vee e^\theta)}{L(n)}\left(\sum_{j=0}^{n_0}(1-\pi_j)\right)\to 0\quad\mbox{as}\ n\to\infty$$
    (in the last inequality we take into account that $1-\frac{\phi(n_0)}{\phi(n+1)}\in[0,1]$). Thus we prove
    \eqref{eq:LB-for-GE-e2-1} if we show that
    \begin{equation}\label{eq:limit-to-prove-after-Abel}
    	 \lim_{n\to\infty}\frac{1}{L(n)}\sum_{j=n_0+1}^{n-1}\left(\sum_{k=0}^j(1-\pi_k)\right)\left(e^{\theta(1-\frac{\phi(j)}{\phi(n+1)})}
    	-e^{\theta(1-\frac{\phi(j+1)}{\phi(n+1)})}\right)=\nu(e^\theta-1).
    \end{equation}
    By \eqref{eq:estimate1} and the second limit in \eqref{eq:estimate2} we can say that, for every arbitrarily small $\varepsilon>0$, 
    there exists $n_0$ such that for $n>n_0$ we have
    $$(\nu-\varepsilon)\log\phi(n)\leq\sum_{k=0}^n(1-\pi_k)\leq(\nu+\varepsilon)\log\phi(n)$$
    and
    $$(1-\varepsilon)\log\phi(n)\leq L(n)\leq(1+\varepsilon)\log\phi(n);$$
    therefore, noting that we can take $n_0$ large enough in order to have $\log\phi(j)>0$ for
    $j>n_0$ (because $\phi$ is strictly increasing and $\phi(n)\to\infty$ as $n\to\infty$), we have
    \begin{align*}&
    	\frac{\nu-\varepsilon}{1+\varepsilon}\cdot\frac{1}{\log\phi(n)}
    	\sum_{j=n_0+1}^{n-1}\log\phi(j)\left(e^{\theta(1-\frac{\phi(j)}{\phi(n+1)})}
    	-e^{\theta(1-\frac{\phi(j+1)}{\phi(n+1)})}\right)\\&
    	\leq\frac{1}{L(n)}\sum_{j=n_0+1}^{n-1}\left(\sum_{k=0}^j(1-\pi_k)\right)\left(e^{\theta(1-\frac{\phi(j)}{\phi(n+1)})}
    	-e^{\theta(1-\frac{\phi(j+1)}{\phi(n+1)})}\right)\\&
    	\leq\frac{\nu+\varepsilon}{1-\varepsilon}\cdot\frac{1}{\log\phi(n)}
    	\sum_{j=n_0+1}^{n-1}\log\phi(j)\left(e^{\theta(1-\frac{\phi(j)}{\phi(n+1)})}
    	-e^{\theta(1-\frac{\phi(j+1)}{\phi(n+1)})}\right).
    \end{align*}
    Then, by the arbitrariness of $\varepsilon$, we prove \eqref{eq:limit-to-prove-after-Abel} if we show that
    $$\lim_{n\to\infty}\frac{1}{\log\phi(n)}
    \sum_{j=n_0+1}^{n-1}\log\phi(j)\left(e^{\theta(1-\frac{\phi(j)}{\phi(n+1)})}
    -e^{\theta(1-\frac{\phi(j+1)}{\phi(n+1)})}\right)=e^\theta-1.$$
    In fact, since
    $$e^{\theta(1-\frac{\phi(j)}{\phi(n+1)})}-e^{\theta(1-\frac{\phi(j+1)}{\phi(n+1)})}
    =e^\theta\cdot e^{-\theta\frac{\phi(j)}{\phi(n+1)}}\left(1-e^{-\theta\frac{\phi(j+1)-\phi(j)}{\phi(n+1)}}\right),$$
     \eqref{eq:limit-to-prove-after-Abel} will be proved if we show that
    $$\lim_{n\to\infty}\frac{1}{\log\phi(n)}
    \sum_{j=n_0+1}^{n-1}\log\phi(j)\cdot e^{-\theta\frac{\phi(j)}{\phi(n+1)}}
    \left(1-e^{-\theta\frac{\phi(j+1)-\phi(j)}{\phi(n+1)}}\right)=1-e^{-\theta};$$
    moreover, by \eqref{eq:estimate-separate}, for $n$ large enough so that $\frac{\phi(j+1)-\phi(j)}{\phi(n+1)}< 
    \frac{\beta }{|\theta|}$  
    (recall that,  by \eqref{eq:conditions-on-phi(n)} and \eqref{eq:concave-phi}, $\phi(j+1)-\phi(j)$ is 
    non-increasing and positive, hence bounded and, by \eqref{eq:conditions-on-phi(n)}, $\phi(n+1)$ diverges) 
    we have
    \begin{align*}&
    	\left|\frac{1}{\log\phi(n)}
    	\sum_{j=n_0+1}^{n-1}\log\phi(j)\cdot e^{-\theta\frac{\phi(j)}{\phi(n+1)}}
    	\left(1-e^{-\theta\frac{\phi(j+1)-\phi(j)}{\phi(n+1)}}-\theta\frac{\phi(j+1)-\phi(j)}{\phi(n+1)}\right)
    	\right|\\&
    	\leq\frac{ \theta^2}{\log\phi(n)}
    	\sum_{j=n_0+1}^{n-1}\log\phi(j)\cdot e^{-\theta\frac{\phi(j)}{\phi(n+1)}}
    	\left(\frac{\phi(j+1)-\phi(j)}{\phi(n+1)}\right)^2,
    \end{align*}
    and therefore we prove \eqref{eq:limit-to-prove-after-Abel} if we show that
    \begin{equation}\label{eq:limit-to-prove-after-Abel-1}
    	\lim_{n\to\infty}\frac{1}{\log\phi(n)}
    	\sum_{j=n_0+1}^{n-1}\log\phi(j)\cdot e^{-\theta\frac{\phi(j)}{\phi(n+1)}}\cdot
    	\frac{\phi(j+1)-\phi(j)}{\phi(n+1)}=\frac{1-e^{-\theta}}{\theta}
    \end{equation}
    and
    \begin{equation}\label{eq:limit-to-prove-after-Abel-2}
   	\lim_{n\to\infty}\frac{1}{\log\phi(n)}
   	\sum_{j=n_0+1}^{n-1}\log\phi(j)\cdot e^{-\theta\frac{\phi(j)}{\phi(n+1)}}
   	\left(\frac{\phi(j+1)-\phi(j)}{\phi(n+1)}\right)^2=0.
   \end{equation}
   Concerning \eqref{eq:limit-to-prove-after-Abel-1}, note that
   \begin{align*}&
   	\frac{1}{\log\phi(n)}
   	\sum_{j=n_0+1}^{n-1}\log\phi(j)\cdot e^{-\theta\frac{\phi(j)}{\phi(n+1)}}\cdot
   	\frac{\phi(j+1)-\phi(j)}{\phi(n+1)}\\&
   	=\frac{1}{\log\phi(n)}
   	\sum_{j=n_0+1}^{n-1}\log\frac{\phi(j)}{\phi(n+1)}\cdot e^{-\theta\frac{\phi(j)}{\phi(n+1)}}\cdot
   	\frac{\phi(j+1)-\phi(j)}{\phi(n+1)}\\&
   	+\frac{\log\phi(n+1)}{\log\phi(n)}
   	\sum_{j=n_0+1}^{n-1}e^{-\theta\frac{\phi(j)}{\phi(n+1)}}\cdot
   	\frac{\phi(j+1)-\phi(j)}{\phi(n+1)}
   \end{align*}
   and, by taking the limit as $n\to\infty$ (we have two integral sums), we get
   $$0\cdot\int_0^1\log x\cdot e^{-\theta x}dx+1\cdot\int_0^1e^{-\theta x}dx=\frac{1-e^{-\theta}}{\theta}.$$
   As far as \eqref{eq:limit-to-prove-after-Abel-2} is concerned, in a similar way we have
   \begin{align*}&
   	\frac{1}{\log\phi(n)}
   	\sum_{j=n_0+1}^{n-1}\log\phi(j)\cdot e^{-\theta\frac{\phi(j)}{\phi(n+1)}}
   	\left(\frac{\phi(j+1)-\phi(j)}{\phi(n+1)}\right)^2\\&
   	=\frac{1}{\log\phi(n)}
   	\sum_{j=n_0+1}^{n-1}\log\frac{\phi(j)}{\phi(n+1)}\cdot e^{-\theta\frac{\phi(j)}{\phi(n+1)}}
   	\left(\frac{\phi(j+1)-\phi(j)}{\phi(n+1)}\right)^2\\&
   	+\frac{\log\phi(n+1)}{\log\phi(n)}
   	\sum_{j=n_0+1}^{n-1}e^{-\theta\frac{\phi(j)}{\phi(n+1)}}
   	\left(\frac{\phi(j+1)-\phi(j)}{\phi(n+1)}\right)^2
   \end{align*}
   and, by \eqref{eq:concave-phi}, we get
    \begin{align*}&
   	0\leq\frac{1}{\log\phi(n)}
   	\sum_{j=n_0+1}^{n-1}\log\phi(j)\cdot e^{-\theta\frac{\phi(j)}{\phi(n+1)}}
   	\left(\frac{\phi(j+1)-\phi(j)}{\phi(n+1)}\right)^2\\&
   	\leq\frac{\phi(1)-\phi(0)}{\phi(n+1)}\left(\frac{1}{\log\phi(n)}
   	\sum_{j=n_0+1}^{n-1}\log\frac{\phi(j)}{\phi(n+1)}\cdot e^{-\theta\frac{\phi(j)}{\phi(n+1)}}
   	\cdot\frac{\phi(j+1)-\phi(j)}{\phi(n+1)}\right.\\&
   	\left.+\frac{\log\phi(n+1)}{\log\phi(n)}
   	\sum_{j=n_0+1}^{n-1}e^{-\theta\frac{\phi(j)}{\phi(n+1)}}
   	\cdot\frac{\phi(j+1)-\phi(j)}{\phi(n+1)}\right);
   \end{align*}
   thus, by taking the limit as $n\to\infty$ in the right hand side (and by taking into account the limit 
   computed above when we have checked \eqref{eq:limit-to-prove-after-Abel-1}), we get
   $$0\cdot\left(0\cdot\int_0^1\log x\cdot e^{-\theta x}dx+1\cdot\int_0^1e^{-\theta x}dx\right)=0$$
   and \eqref{eq:limit-to-prove-after-Abel-2} is proved. This completes the proof of \eqref{eq:LB-for-GE}
   (for $\theta>0$).
   
   \noindent
   The proof of \eqref{eq:UB-for-GE} is similar to the proof of \eqref{eq:LB-for-GE}.
   Indeed, for every arbitrarily small $\varepsilon>0$, there exists $n_0$ such that for $n>n_0$ we have
   $$1-\varepsilon\leq\frac{\phi(n)}{\phi(n-1)}\leq 1+\varepsilon,$$
   and therefore, if $\theta >0$
   $$\frac{1}{L(n)}\sum_{j=n_0+1}^n\log\mathbb{E}\left[e^{\theta(1-\varepsilon)R_j}\right]
   \leq\frac{1}{L(n)}\sum_{j=n_0+1}^n\log\mathbb{E}\left[e^{\theta\frac{\phi(j)}{\phi(j-1)}R_j}\right]
   \leq\frac{1}{L(n)}\sum_{j=n_0+1}^n\log\mathbb{E}\left[e^{\theta(1+\varepsilon)R_j}\right],$$
   (while for $\theta < 0$ the above inequalities are reversed).
   Moreover, by adapting the computations above to prove \eqref{eq:LB-for-GE}, we have
   $$\frac{1}{L(n)}\sum_{j=n_0+1}^n\log\mathbb{E}\left[e^{\theta(1\pm\varepsilon)R_j}\right]
   =\nu(e^{\theta\pm\varepsilon}-1)$$
   (just replace the parts with $1-\frac{\phi(j)}{\phi(+1)}$ with $1\pm\varepsilon$);
   thus \eqref{eq:UB-for-GE} holds  by the arbitrariness of $\varepsilon$.

\subsection*{Acknowledgements}
The authors thank an anonymous referee for some useful comments.

\subsection*{Funding}
The authors acknowledge the support of MUR Excellence Department Project 
awarded to the Department of Mathematics, University of Rome Tor Vergata (CUP E83C23000330006), of 
University of Rome Tor Vergata (CUP E83C25000630005) Research Project METRO, and of INdAM-GNAMPA.

\end{document}